\theoremstyle{plain}
\newtheorem{theorem}{Theorem}[section]
\newtheorem{conjecture}{Conjecture}[section]
\newtheorem{lemma}{Lemma}[section]
\newtheorem{proposition}{Proposition}[section]
\theoremstyle{definition}
\renewcommand{\geq}{\geqslant}
\renewcommand{\leq}{\leqslant}
\newcommand{\dd}{\mathrm{d}}
\numberwithin{equation}{section}
\def\Ddots{\mathinner{\mkern1mu\raise\p@
\vbox{\kern7\p@\hbox{.}}\mkern2mu
\raise4\p@\hbox{.}\mkern2mu\raise7\p@\hbox{.}\mkern1mu}}
\DeclareRobustCommand\widecheck[1]{{\mathpalette\@widecheck{#1}}}
\def\@widecheck#1#2{%
    \setbox\z@\hbox{\m@th$#1#2$}%
    \setbox\tw@\hbox{\m@th$#1%
       \widehat{%
          \vrule\@width\z@\@height\ht\z@
          \vrule\@height\z@\@width\wd\z@}$}%
    \dp\tw@-\ht\z@
    \@tempdima\ht\z@ \advance\@tempdima2\ht\tw@ \divide\@tempdima\thr@@
    \setbox\tw@\hbox{%
       \raise\@tempdima\hbox{\scalebox{1}[-1]{\lower\@tempdima\box
\tw@}}}%
    {\ooalign{\box\tw@ \cr \box\z@}}}
\begin{document}

\title[The distribution of powers of primes]
{The distribution of powers of primes related to the Frobenius problem}

\author{Enxun Huang}

\author{Tengyou Zhu}

\address{School of Mathematics, Shandong University
                       \\Jinan, Shandong 250100, China}

\email{huangenxun@mail.sdu.edu.cn}

\email{zhuty@mail.sdu.edu.cn}	

\date{}

\keywords{Frobenius--type problems, Hardy--Littlewood method,
prime powers, Siegel--Walfisz theorem}

\subjclass[2010]{11B25, 11F11, 11F30, 11L05, 11N37, 11T23}

\begin{abstract}
Let $1<c<d$ be two relatively prime integers, $g_{c,d}=cd-c-d$
and $\mathbb{P}$ is the set of primes.
For any given integer $k \geq 1$, we prove that
$$\#\left\{p^k\le g_{c,d}:p\in \mathbb{P},
~p^k=cx+dy,~x,y\in \mathbb{Z}_{\geqslant0}
\right\}\sim \frac{k}{k+1}\frac{g^{1/k}}{\log g}
\quad (\text{as}~c\rightarrow\infty),$$
which gives an extension of a recent result of Ding, Zhai and Zhao.
\end{abstract}

\maketitle

\section{Introduction}\label{intr.}
Given two coprime integers $1< c<d$, Sylvester~\cite{SJJ} discovered in 1882 that $cd-c-d$ represents the largest number, denoted as $g_{c,d}$, which cannot be expressed as $cx+dy~(x,y\in \mathbb{Z}_{\geqslant0})$. Subsequently, the author demonstrated that for any $0\leq m\leq g_{c,d}$, exactly one of $m$ and $g_{c,d}-m$ can be expressed in the form $cx+dy~(x,y\in \mathbb{Z}_{\geqslant0})$. Consequently, it can be concluded that precisely half of the integers within the range $[0,g_{c,d}]$ can be represented in this desired form.

 Indeed, Sylvester's findings represent a significant breakthrough in the field of the diophantine Frobenius problem, marking the first instance of nontrivial solutions \cite{RA}. This problem seeks to find the largest integer $g_{c_1,...,c_n}$ not that can be expressed as $$c_1x_1+\cdots+c_nx_n \quad (x_1,...,x_n\in \mathbb{Z}_{\geqslant0}),$$given that $c_1,...,c_n$ are positive integers with $\gcd(c_1,...,c_n)=1$. There exists an extensive body of literature dedicated to studying the diophantine Frobenius problem. For comprehensive insights and findings on this topic, please refer to Ram\'{\i}rez Alfons\'{\i}n's exceptional monograph \cite{RA}.
Motivated by Sylvester's theorems, Ram\'{\i}rez Alfons\'{\i}n and Ska{\l}ba \cite{RS} investigated the Diophantine Frobenius problem restricted to prime numbers. Specifically, let $\pi_{c,d,k}$ denote the number of $k$-th powers of primes not exceeding $g_{c,d}$ with the form $cx+dy~(x,y\in \mathbb{Z}_{\geqslant0})$. Through a highly enlightening argument put forth by Ram\'{\i}rez Alfons\'{\i}n and Ska{\l}ba, it has been demonstrated that for any $\varepsilon>0$, there exists a constant $k_{\varepsilon}>0$ such that
$$\pi_{c,d,1}\geqslant k_{\varepsilon}\frac{g_{c,d}}{(\log g_{c,d})^{2+\varepsilon}}.$$
After observing the antisymmetry property of integers in the form of $cx+dy~(x,y\in \mathbb{Z}_{\geqslant0})$ discovered by Sylvester, they naturally formulated the following conjecture.

\begin{conjecture}[Ram\'{\i}rez Alfons\'{\i}n and Ska{\l}ba]\label{conjecture1}
Let $1< c<d$ be two relatively prime integers, then $$\pi_{c,d,1}\sim\frac{\pi(g_{c,d})}{2}\quad (\text{as}~c\rightarrow\infty),$$
where $\pi(t)$ is the number of primes up to $t$.
\end{conjecture}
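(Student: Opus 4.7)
My plan is to reduce Conjecture~\ref{conjecture1} to prime counting in arithmetic progressions modulo $d$ via Sylvester's characterization of representable integers, and then to dispatch the resulting error using the Siegel--Walfisz theorem in the small-$d$ regime and the Hardy--Littlewood circle method in the large-$d$ regime.

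Let $\bar c$ denote the inverse of $c$ modulo $d$, and for each $a \in \{0,1,\dots,d-1\}$ set $\xi_a = a\bar c \bmod d \in [0, d-1]$. An elementary check, essentially the uniqueness of the decomposition $n = cx + dy$ with $0 \le x < d$, shows that an integer $n \in [0, g_{c,d}]$ with $n \equiv a \pmod d$ is representable as $cx + dy$ with $x, y \ge 0$ if and only if $c\xi_a \le n$. Splitting primes by residue class yields
\[
\pi_{c,d,1} = \sum_{\substack{a=1 \\ (a,d)=1}}^{d-1} \bigl(\pi(g_{c,d}; d, a) - \pi(c\xi_a; d, a)\bigr) + O(\omega(d)),
\]
where the error absorbs the finitely many primes dividing $d$.

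The heart of the proof is to replace each $\pi(y;d,a)$ by its expected main term $\mathrm{Li}(y)/\varphi(d)$. Doing so formally, and using that $a \mapsto \xi_a$ is a bijection on units modulo $d$, produces the heuristic main term $\mathrm{Li}(g_{c,d}) - \varphi(d)^{-1}\sum_{(u,d)=1}\mathrm{Li}(cu)$. Abel summation (essentially estimating the partial sums of the Jordan totient against the integral $\int_0^{cd} s/\log s\,ds$) gives $\sum_{(u,d)=1}\mathrm{Li}(cu) \sim \varphi(d)\,g_{c,d}/(2\log g_{c,d})$, so the heuristic collapses to $\pi(g_{c,d})/2$ as required. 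To make this rigorous when $d \le (\log g_{c,d})^A$, Siegel--Walfisz supplies a pointwise bound $|\pi(y;d,a) - \mathrm{Li}(y)/\varphi(d)| \ll y\exp(-c\sqrt{\log y})$ uniformly in $a$, and summing over $\varphi(d)$ residue classes loses only a factor $(\log g)^A$. When $d$ is larger, Siegel--Walfisz fails and one instead expands the indicator $\mathbf{1}_{n \equiv a \pmod d}$ via additive characters mod $d$; the $h=0$ frequency produces the main term intrinsically as $\frac{1}{d}\sum_{p\le g}(\lfloor p/c\rfloor+1) \sim g_{c,d}/(2\log g_{c,d})$, while the $h\ne 0$ contributions factor as $(1/d)$ times a geometric sum in the Sylvester variable $x$ and the exponential sum over primes $\sum_{p\le g_{c,d}} e(hp/d)$, which is controllable by Vinogradov's type I / type II decomposition of $\Lambda$.

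The principal obstacle is the intermediate range where $d$ is too large for Siegel--Walfisz to absorb the loss of summing over $\varphi(d)$ classes but not so large that Vinogradov's minor-arc bound beats the factor $d$ coming from the number of non-zero frequencies $h$. I would handle this by a dyadic decomposition of $h$ according to $\gcd(h,d)$: for each dyadic block, the corresponding reduced denominator $d/\gcd(h,d)$ controls which of Siegel--Walfisz (near major arcs) or Vinogradov's method (near minor arcs) is applicable, and the geometric sum in $x$ provides an additional damping factor $\min(g/c, d/h)$ that is crucial for convergence of the total error. A subsidiary difficulty is making the Abel summation estimate for $\sum_{(u,d)=1}\mathrm{Li}(cu)$ uniform in $d$ up to $d \asymp g_{c,d}$, but this is a routine exercise on the Jordan totient. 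Once these two ingredients are in place, the asymptotic $\pi_{c,d,1} \sim \pi(g_{c,d})/2$ follows.
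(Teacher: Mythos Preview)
Your outline has two gaps, one cosmetic and one structural.

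\textbf{The small-$d$ regime is empty.} Since $1<c<d$ and $g=cd-c-d<d^2$, one always has $d>g^{1/2}$, so the condition $d\le(\log g)^A$ is never satisfied once $g$ is large. The Siegel--Walfisz portion of your dichotomy therefore carries no weight. The paper splits instead according to the size of $c$: for $c\le(\log g)^{m+1}$ one writes
\[
\psi_{c,d}=\sum_{\substack{1\le y\le c\\(y,c)=1}}\bigl(\psi(g;c,dy)-\psi(dy;c,dy)\bigr)+O(1)
\]
and applies Siegel--Walfisz modulo $c$. This is the symmetric version of your reduction, and it is the one that is actually usable.

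\textbf{The claimed factorisation for $h\ne 0$ fails.} After your character expansion the $h$-th contribution is
\[
E_h=\frac{1}{d}\sum_{p\le g}e\!\left(\frac{hp}{d}\right)G_h\!\bigl(\lfloor p/c\rfloor\bigr),\qquad G_h(m)=\sum_{x=0}^{m}e\!\left(-\frac{hcx}{d}\right),
\]
and $G_h$ genuinely depends on $p$ through $\lfloor p/c\rfloor$, so the prime sum and the geometric sum do \emph{not} separate. Abel summation gives only
\[
|E_h|\ \ll\ \frac{M_h}{d}\bigl(|S_h(g)|+\pi(g)\bigr),\qquad M_h=\|hc/d\|^{-1},\quad S_h(g)=\sum_{p\le g}e(hp/d),
\]
because the increments $S_h(c(m+1))-S_h(cm)$ are short prime sums with no exploitable cancellation. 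Since $\sum_{h\ne 0}M_h\asymp d\log d$, the term $M_h\pi(g)/d$ summed over $h$ already gives $\asymp\pi(g)\log d$, which swamps the main term. The damping $\min(g/c,d/h)$ and the Vinogradov saving in $S_h$ cannot be used simultaneously here, so your proposed dyadic decomposition in $\gcd(h,d)$ does not close the gap.

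\textbf{What the paper does instead.} For large $c$ the paper runs the circle method against the generating function
\[
h(\alpha)=\sum_{0\le x\le d}e(c x\alpha)\sum_{0\le y\le c}e(d y\alpha),
\]
which is a \emph{product} of two geometric sums. This yields the crucial $L^1$-bound $\int_0^1|h(-\alpha)|\,d\alpha\ll(\log g)^2$ (Lemma~\ref{lemma3.2}), so that on the minor arcs one may simply take $\sup|f|\cdot\int|h|$. The second geometric factor, coming from the free $y$-variable, is exactly what is missing from a detection scheme that uses only characters modulo $d$ (or modulo $c$); it is this factor that converts your divergent $\sum_h M_h$ into a convergent $(\log g)^2$.
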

The recent proof of Conjecture \ref{conjecture1} by Ding, Zhai, and Zhao in~\cite{DZZ} has established its validity. Prior to this, Ding~\cite{D} demonstrated that Conjecture \ref{conjecture1} holds true for almost all values of $c$ and $d$. Remarkably, the complete verification of Conjecture \ref{conjecture1} relies on the application of the classical Hardy-Littlewood method.

The main result is presented below. Throughout the remainder of this paper, let $k\geq 1$ be an integer.
For convenience, we write $g$ as $g_{c,d}$ and $\pi_{c,d}$ as $\pi_{c,d,k}.$
Now, we shall establish the following theorem to document our findings.

\begin{theorem}\label{thm1}
Suppose that $d>c$ are two relatively prime integers with $c$ sufficiently large, then we have
$$\pi_{c,d}\sim \frac{k}{k+1}\frac{g^{1/k}}{\log g}, \quad \text{as~}c\rightarrow\infty.$$
\end{theorem}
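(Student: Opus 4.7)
The plan is to adapt the Fourier-analytic / Hardy--Littlewood scheme of Ding, Zhai and Zhao~\cite{DZZ} from primes to prime $k$-th powers. Sylvester's criterion, combined with $\gcd(c,d)=1$, says that each $m\in[0,cd)$ has a unique residue $a(m)\in\{0,\ldots,d-1\}$ with $a(m)\,c\equiv m\pmod d$, and $m$ is representable iff $a(m)\,c\le m$. Applying this with $m=p^k\le g<cd$ and opening the indicator with additive characters $e_d(x)=e^{2\pi ix/d}$ gives
\[
\pi_{c,d}=\frac{1}{d}\sum_{p\le g^{1/k}}\bigl(\lfloor p^k/c\rfloor+1\bigr)+\frac{1}{d}\sum_{h=1}^{d-1}\sum_{p\le g^{1/k}}e_d(-hp^k)\sum_{0\le a\le p^k/c}e_d(hac).
\]
By the prime number theorem and partial summation $\sum_{p\le Y}p^k\sim Y^{k+1}/((k+1)\log Y)$; substituting $Y=g^{1/k}$ and $cd\sim g$ shows the ``diagonal'' $h=0$ term equals $\frac{k}{k+1}\cdot\frac{g^{1/k}}{\log g}(1+o(1))$, matching the asserted main term.

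It remains to bound the off-diagonal sum by $o(g^{1/k}/\log g)$. Collapsing the inner geometric progression via $p^k=c\lfloor p^k/c\rfloor+(p^k\bmod c)$ reduces the task to estimating two oscillatory sums over primes,
\[
E_1(h)=\sum_{p\le g^{1/k}}e_d(-hp^k),\qquad E_2(h)=\sum_{p\le g^{1/k}}e_d\bigl(-h(p^k\bmod c)\bigr),
\]
weighted by $1/(e_d(hc)-1)\ll 1/\|hc/d\|$. Because $\gcd(c,d)=1$, as $h$ runs through $\{1,\ldots,d-1\}$ the residues $hc\bmod d$ form a permutation of the same set, so the total weight is $\ll d\log d$; since $d\le g$, it suffices to show $|E_j(h)|\ll g^{1/k}(\log g)^{-A}$ uniformly in $h$ for some sufficiently large $A$.

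The sum $E_2(h)$ depends on $p$ only through $p\bmod c$ and so reduces, after Siegel--Walfisz (or Bombieri--Vinogradov when $c$ exceeds a log-power) applied to primes in residue classes mod $c$, to a small character-type sum. The core sum is $E_1(h)$: Vinogradov's exponential sum over prime $k$-th powers at the rational $-h/d$. I would apply the Hardy--Littlewood dissection by denominator $q=d/\gcd(h,d)$: on minor arcs (moderate $q$) use Vaughan's identity followed by $k-1$ rounds of Weyl differencing to extract a power-log saving; on major arcs (small $q$) use Siegel--Walfisz to equidistribute $p$ mod $q$ and then bound the resulting Gauss--Weyl sum by standard means. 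For $k=1$ this recovers precisely the argument of~\cite{DZZ}; the extension to $k\ge 2$ replaces Vinogradov's linear prime sum by its $k$-th power analogue.

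The main obstacle will be the uniformity of the minor-arc bound for $E_1(h)$ across the entire range of $d$: the modulus can range from $c+1$ (small compared to $g^{1/k}$) up to an arbitrary power of $c$, and in the intermediate regime neither Siegel--Walfisz nor the Weyl bound alone is strong enough, so the two must be stitched together by the circle method. A further subtlety is that Weyl differencing degrades as $k$ grows, which shifts the major/minor arc thresholds; once these technicalities are navigated the off-diagonal contribution is $o(g^{1/k}/\log g)$ and Theorem~\ref{thm1} follows.
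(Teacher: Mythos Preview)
Your decomposition via additive characters modulo $d$ is a legitimate starting point, and the diagonal term does give the asserted main term. The gap is in the off-diagonal analysis: the assertion that it ``suffices to show $|E_j(h)|\ll g^{1/k}(\log g)^{-A}$ uniformly in $h$'' is false. Take $d$ even and $h=d/2$; then for $k=1$ one has $E_1(d/2)=\sum_{p\le g}(-1)^p=2-\pi(g)\sim -g/\log g$, with no saving beyond the trivial. More generally, whenever $h/d$ reduces to a fraction $a/q$ with $q$ small, $E_1(h)$ sits on a major arc and has size $\asymp \frac{|\mu(q)|}{\varphi(q)}\pi(g^{1/k})$; your proposed remedy (Siegel--Walfisz then ``bound the resulting Gauss--Weyl sum'') does not help, because for small $q$ that Gauss sum is simply not small. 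The same failure occurs for $E_2(h)$: for $h$ with $h/d$ near $0$ the residual character sum over residues modulo $c$ is essentially the full count $\varphi(c)$, so $E_2(h)\sim\pi(g^{1/k})$.

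What actually makes the argument work is an \emph{anticorrelation} you have not identified: when $h/d\approx a/q$ with $2\le q$ small (so $E_1(h)$ is large), the weight $\|hc/d\|^{-1}$ is bounded by $O(q)$, because $\gcd(c,d)=1$ forces $q\nmid c$ or $q\nmid d$ and hence $\|hc/d\|\gg 1/q$. This is precisely the content of the paper's Lemma~\ref{major arcs2}, which shows $h(-a/q-\beta)\ll qd$ on major arcs with $q\ge2$. The paper exploits this by running the circle method on the integral $\int f(\alpha)h(-\alpha)\,d\alpha$ rather than on the discrete transform; on the major arcs with $q\ge2$ the smallness of $h(-\alpha)$ absorbs the lack of cancellation in $f(\alpha)$, while the $q=1$ arc is compared directly to the unweighted count $\mathcal{N}$ of representable $k$-th powers (Theorem~\ref{thm2}). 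The paper also treats the regime $c\le(\log g)^{m+1}$ separately by a direct application of Siegel--Walfisz in arithmetic progressions modulo $c$, bypassing the circle method entirely there; your single-track approach via Bombieri--Vinogradov for $E_2$ would not cover this range cleanly. Finally, your remark that the $k=1$ case ``recovers precisely the argument of~\cite{DZZ}'' is inaccurate: \cite{DZZ} uses the same integral circle method as the present paper, not a discrete expansion modulo $d$.
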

Theorem~\ref{thm1} presented herein extends the findings in~\cite{DZZ} to encompass arbitrary positive integers $k$. The proof methodology employed in this study aligns with a recent publication by Ding, Zhai, and Zhao~\cite{DZZ}.

The key input for the proof of Theorem~\ref{thm1} is the following result.
\begin{theorem}\label{thm2}
Let $c,d$ be as in Theorem~\ref{thm1}. Denote
\begin{equation*}
 \mathcal{N} := \#\{n^k \leq g_{c,d}: n^k=cx+dy,~x,y\in \mathbb{Z}_{\geqslant0}\},
\end{equation*}
then we have
$$\mathcal{N}  \sim \frac{g^{1/k}}{k+1}, \quad \text{as~}c\rightarrow\infty.$$
\end{theorem}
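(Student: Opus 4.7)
Following the Hardy--Littlewood approach of Ding, Zhai, and Zhao, set $N := \lfloor g^{1/k}\rfloor$. Sylvester's classical uniqueness property (two distinct representations of the same $m \le g$ would differ by a lattice vector forcing $m \ge cd > g$) gives $r_{c,d}(m) \in \{0,1\}$ for every $m \le g$. Hence
\[
\mathcal{N} \;=\; \sum_{n \le N}\sum_{\substack{0 \le y \le \lfloor n^k/d\rfloor \\ dy \equiv n^k \pmod c}} 1.
\]
Detecting the congruence by additive characters modulo $c$, $\mathbf{1}[dy \equiv n^k \pmod c] = \tfrac{1}{c}\sum_{h=0}^{c-1} e(h(n^k - dy)/c)$, isolates a main term from $h=0$:
\[
M = \frac{1}{c}\sum_{n \le N}\left(\left\lfloor \frac{n^k}{d} \right\rfloor + 1\right) = \frac{N^{k+1}}{(k+1)cd} + O\!\left(\frac{N^k}{cd} + \frac{N}{c}\right),
\]
which equals $g^{1/k}/(k+1) \cdot (1 + o(1))$ since $g = cd-c-d \sim cd$ and $N \sim g^{1/k}$. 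The remaining $h \ne 0$ terms form the error $E$; the crux is to show $E = o(g^{1/k})$.

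Summing the inner geometric series in $y$ and using $n^k - d\lfloor n^k/d\rfloor = n^k \bmod d$, the error simplifies to
\[
E = \frac{1}{c}\sum_{h=1}^{c-1}\frac{1}{1-e(-hd/c)}\left[\sum_{n\le N}e(hn^k/c) - e(-hd/c)\sum_{n\le N}e(h(n^k\bmod d)/c)\right].
\]
The prefactor satisfies $|1-e(-hd/c)|^{-1} \ll c/H(h)$, with $H(h) := \min(hd \bmod c,\,c - hd \bmod c)$, and $\sum_{h=1}^{c-1} H(h)^{-1} \ll \log c$. The first bracketed sum is a classical polynomial Weyl sum, bounded by Weyl's inequality with nontrivial savings of shape $N/q^{1/2^{k-1}}$ when $k \ge 2$ (where $q = c/\gcd(h,c)$), or by the geometric series bound $\ll 1/\|h/c\|$ when $k=1$. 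The second sum, after grouping $n$ by its residue modulo $d$, reduces to $(N/d)\sum_{a=0}^{d-1}e(h(a^k\bmod d)/c) + O(d)$, where the complete exponential sum can be controlled by classical Gauss-sum or Hua-type estimates.

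The main obstacle will be obtaining uniformity in $h$ when $\gcd(h,c)$ is large: then the reduced denominator $q = c/\gcd(h,c)$ is small, Weyl's inequality loses its edge for $\sum_n e(hn^k/c)$, and $\sum_a e(h(a^k\bmod d)/c)$ admits no automatic cancellation. For such bad $h$ one must fall back on a completion-of-sums argument, writing $\sum_{n\le N} e(hn^k/c) = (N/q)\sum_{r=0}^{q-1} e(h'r^k/q) + O(q)$ with $\gcd(h',q)=1$, and bounding the complete sum via $\ll q^{1-1/k+\varepsilon}$ (Hua's inequality, or Weil's bound when $q$ is prime); a parallel treatment dispatches the second sum. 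Combined with $\sum_h H(h)^{-1} \ll \log c$ and $g \sim cd$ as $c,d \to \infty$, this yields $E = o(g^{1/k})$, and together with $M$ this proves the theorem.
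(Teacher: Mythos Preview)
Your approach diverges from the paper's and, as written, has a genuine gap in the treatment of the second bracketed sum
\[
S_2(h)=\sum_{n\le N}e\!\left(\frac{h\,(n^k\bmod d)}{c}\right).
\]
You propose to ``group $n$ by its residue modulo $d$'' to obtain $S_2(h)=(N/d)T(h)+O(d)$ with
$T(h)=\sum_{a=0}^{d-1}e(h(a^k\bmod d)/c)$. But for every $k\ge 2$ one has
$N=\lfloor g^{1/k}\rfloor\asymp (cd)^{1/k}<d^{2/k}\le d$, so the range $n\le N$ does not cover even one full period modulo $d$; the decomposition is vacuous and the $O(d)$ term already swallows $S_2$ entirely. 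Worse, feeding this $O(d)$ back through your error bound gives a contribution
\[
\frac{1}{c}\sum_{h=1}^{c-1}\frac{1}{|1-e(-hd/c)|}\cdot O(d)\ \asymp\ \frac{d}{c}\sum_{h=1}^{c-1}\|hd/c\|^{-1}\ \asymp\ d\log c,
\]
and for $k\ge 2$ one has $d\log c\gg (cd)^{1/k}\asymp g^{1/k}$ whenever $d>c$, so this alone prevents $E=o(g^{1/k})$. Independently, $T(h)$ is \emph{not} a Gauss or Hua sum in any standard sense: the phase $h(a^k\bmod d)/c$ mixes the moduli $d$ (inside the residue) and $c$ (in the denominator), and none of the classical complete--sum bounds apply to it directly. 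Your ``parallel treatment'' by completion therefore does not go through.

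By contrast, the paper stays with the \emph{continuous} circle method,
\[
\mathcal{N}=\int_{0}^{1}F(\alpha)h(-\alpha)\,\dd\alpha,\qquad
F(\alpha)=\sum_{n^k\le g}e(\alpha n^k),\quad
h(\alpha)=\sum_{x\le d}\sum_{y\le c}e(\alpha(cx+dy)),
\]
and dissects at level $Q$. On the minor arcs $F$ is bounded by a Weyl--type estimate; on the major arcs one approximates $F(a/q+\beta)\approx S(q,a)v(\beta)$ and, crucially, uses that for $2\le q\le Q$ one has $h(-a/q-\beta)\ll qd$ (since $\gcd(c,d)=1$ forces $q\nmid c$ or $q\nmid d$), killing all non--principal major arcs. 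The principal arc then yields $\int v(\beta)h(-\beta)\,\dd\beta$, which is evaluated by partial summation against the elementary count $\mathcal{K}_\ell$ of representable integers up to $\ell$. The mixed--modulus obstruction you encounter in $S_2$ simply never arises, because the generating function $h(\alpha)$ keeps the $c$-- and $d$--variables factored and the saving comes from whichever of the two geometric sums has denominator not divisible by $q$.
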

%The fact that the complete proof of Theorem~\ref{thm1} can also be derived from an application of the classical Hardy-Littlewood method is, however, quite surprising. A novel aspect in our argument lies in the observation that only the first coefficient of the `singular series' contributes to the main term of the asymptotic formula, distinguishing it from conventional applications of the Hardy-Littlewood method. The idea presented here aligns with Ding, Zhai and Zhao~\cite{DZZ}.

The initial investigation will follow the customary  and focus on the weighted form associated with Theorem~\ref{thm1}, specifically
\begin{align*}
\psi_{c,d} = \sum_{\substack{n^k\leq g\\ n^k=cx+dy\\ x,y\in \mathbb{Z}_{\geqslant0}}}\Lambda (n),
\end{align*}
where $\Lambda(n)$ denotes the von Mangoldt function defined as
\begin{equation*}
\Lambda(n)=
\begin{cases}
\log p, & \text{if } n=p^{\alpha}~(\alpha>0);\\
0, & \text{otherwise}.
\end{cases}
\end{equation*}
The proof of Theorem\ref{thm1} will be established using the following weighted formula through a standard transition method.

\begin{theorem}\label{thm3}
Let $c,d$ be as in Theorem~\ref{thm1},  then we have
$$\psi_{c,d}\sim \frac{g^{1/k}}{k+1}, \quad \text{as~}c\rightarrow\infty.$$
\end{theorem}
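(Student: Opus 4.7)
My plan is to apply the Hardy--Littlewood circle method and extend the argument of Ding, Zhai and Zhao \cite{DZZ} from the case $k=1$ to arbitrary $k$. The starting identity is that, since the representation function $r(m):=\#\{(x,y)\in\mathbb{Z}_{\geq 0}^{2}:cx+dy=m\}$ takes only the values $0$ and $1$ for $0\leq m\leq g$, orthogonality yields
\[
\psi_{c,d} \;=\; \int_{0}^{1} S_{c}(\alpha)\,S_{d}(\alpha)\,\Psi_{k}(\alpha)\,d\alpha,
\]
where $S_{c}(\alpha):=\sum_{0\leq x\leq g/c}e(cx\alpha)$, $S_{d}$ is defined analogously, and $\Psi_{k}(\alpha):=\sum_{n\leq g^{1/k}}\Lambda(n)e(-n^{k}\alpha)$. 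I would then dissect $[0,1]$ into a principal major arc $\mathfrak{M}_{0}=\{|\alpha|\leq P/g\}$ with $P:=(\log g)^{A}$, secondary major arcs around each $a/q$ with $(a,q)=1$, $2\leq q\leq P$, and the remaining minor arcs $\mathfrak{m}$.

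The main term will come from $\mathfrak{M}_{0}$. The geometric-sum formulae give, for $|\alpha|\leq P/g$, the approximations $S_{c}(\alpha)\approx(g/c)F(g\alpha)$ and $S_{d}(\alpha)\approx(g/d)F(g\alpha)$ with $F(\gamma):=(e(\gamma)-1)/(2\pi i\gamma)$, while Abel summation against the prime number theorem yields $\Psi_{k}(\alpha)\approx(g^{1/k}/k)\int_{0}^{1}v^{1/k-1}e(-vg\alpha)\,dv$. After the change of variables $\alpha=\gamma/g$, the scaling $g^{2}/(cd)\cdot(1/g)=g/(cd)\to 1$ is absorbed and the leading contribution reduces to
\[
\frac{g^{1/k}}{k}\int_{-\infty}^{\infty}F(\gamma)^{2}\int_{0}^{1}v^{1/k-1}e(-v\gamma)\,dv\,d\gamma.
\]
Since $F(\gamma)^{2}$ is the Fourier transform of the triangle function $\operatorname{tri}$ on $[0,2]$ (and $\operatorname{tri}(v)=v$ for $v\in[0,1]$), the inner $\gamma$-integral collapses to $\operatorname{tri}(v)$, leaving $\int_{0}^{1}v^{1/k}\,dv=k/(k+1)$, so $\mathfrak{M}_{0}$ contributes $g^{1/k}/(k+1)$, exactly matching the predicted asymptotic. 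Secondary major arcs are handled by a routine analysis: factoring $S_{c}(a/q+\beta)$ and $S_{d}(a/q+\beta)$ through residue classes modulo $q/\gcd(c,q)$ and $q/\gcd(d,q)$ yields $|S_{c}|,|S_{d}|\ll q\leq P$, and combining with the Siegel--Walfisz approximation for $\Psi_{k}$ shows that the total contribution of all secondary arcs is $\ll P^{5}g^{1/k-1}=o(g^{1/k})$.

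The principal difficulty is the minor-arc estimate $\int_{\mathfrak{m}}S_{c}S_{d}\Psi_{k}\,d\alpha=o(g^{1/k})$. A Vaughan--Siegel--Walfisz argument yields $\sup_{\mathfrak{m}}|\Psi_{k}(\alpha)|\ll g^{1/k}(\log g)^{-A}$, but the direct pairing with the Cauchy--Schwarz/Parseval bound $\|S_{c}S_{d}\|_{1}\leq\|S_{c}\|_{2}\|S_{d}\|_{2}=g/\sqrt{cd}\sim\sqrt{g}$ only gives $O(g^{1/k+1/2}(\log g)^{-A})$, which is much too large. To secure the needed savings I would further sub-dissect $\mathfrak{m}$ according to the Diophantine type of $\alpha$ relative to $c$ and to $d$: since $\gcd(c,d)=1$, the distances $\|c\alpha\|$ and $\|d\alpha\|$ cannot both be very small outside $\mathfrak{M}_{0}$, so on each sub-arc at least one of the pointwise bounds $|S_{c}(\alpha)|\leq\min(g/c,\|c\alpha\|^{-1})$ or its $d$-analogue yields extra cancellation when combined with the Weyl--Vinogradov bound on $\Psi_{k}$. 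This minor-arc bound is the technical crux of the argument; once it is in hand, combining all pieces gives $\psi_{c,d}\sim g^{1/k}/(k+1)$, as required.
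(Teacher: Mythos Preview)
Your overall strategy matches the paper's, but there is a genuine gap in the secondary major arc analysis that forces a structural addition you have omitted. The claim that factoring $S_c(a/q+\beta)$ through residues mod $q/\gcd(c,q)$ gives $|S_c|\ll q$ is false when $q\mid c$: in that case $ca/q\in\Zz$, there is no cancellation from the rational part, and $|S_c|\approx g/c\approx d$. Since $\gcd(c,d)=1$, at most one of $q\mid c$, $q\mid d$ can occur, so the correct bound on the product is $|S_cS_d|\ll qd$ (this is exactly Lemma~\ref{major arcs2} in the paper), not $q^{2}$. Consequently the total secondary major arc contribution is $\ll dP^{3}g^{1/k-1}\asymp P^{3}g^{1/k}/c$, which is $o(g^{1/k})$ \emph{only} when $c\gg P^{3}$.

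This is why the paper's argument explicitly requires $Q<c^{1/3}$ in Proposition~\ref{proposition 4.1} and then, crucially, treats the complementary range $c\le(\log g)^{m+1}$ by an entirely different method: one writes $\psi_{c,d}$ directly as a sum of $\psi(\cdot;c,\cdot)$ over residue classes and applies the Siegel--Walfisz theorem (since $c$ is now a power of $\log g$), together with the elementary evaluation $\sum_{(y,c)=1,\,y\le c}y^{1/k}/\varphi(c)=\frac{k}{k+1}c^{1/k}+O(1)$. Your proposal contains no analogue of this small-$c$ case, and without it the proof does not close. On the minor arcs your sub-dissection idea is essentially the mechanism behind the key $L^{1}$ bound $\int_{0}^{1}|S_cS_d|\,d\alpha\ll\log^{2}g$ (Lemma~\ref{lemma3.2}, imported from \cite{DZZ}); stating and using that bound directly would make your minor-arc step both rigorous and much shorter.
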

 The proofs heavily rely on the exponential sum estimates in~\cite{Kum}, the prime number theorem, and the Siegel--Welfisz theorem.

\section{First transformations}
We first fix some basic notations to be used frequently. From now on, we write $g$ instead of $g_{c,d}$ for brevity and $c$ is supposed to be sufficiently large. Let $Q$ denote a positive integer depending only on $g$ which shall be decided later.
Define the major arcs to be

\begin{align}\label{eq2-1}
\mathfrak{M}(Q)=\bigcup_{1\leq q\leq Q}\bigcup_{\substack{1\leq a\leq q\\ (a,q)=1}}\left\{\alpha:\left|\alpha-\frac{a}{q}\right|\leq \frac{Q}{qg}\right\}.
\end{align}
We make a further provision that $Q< (g/2)^{1/3}$ so that the above subsets are pairwise disjoint. In fact, suppose that
$$\left\{\alpha:\left|\alpha-\frac{a}{q}\right|\leq \frac{Q}{qg}\right\}\bigcap\left\{\alpha:\left|\alpha-\frac{a'}{q'}\right|\leq \frac{Q}{qg}\right\}\neq \emptyset$$
for some $\frac{a}{q}\neq \frac{a'}{q'}$, then
$$\frac{2Q}{g}\geq \frac{2Q}{qg}\geq \left|\frac{a}{q}-\frac{a'}{q'}\right|\geq \frac{1}{Q^2},$$
which is certainly a contradiction with the provision that $Q< (g/2)^{1/3}$. In addition, we note that
$$\mathfrak{M}(Q)\subseteq \left[\frac{1}{Q}-\frac{Q}{qg}, 1+\frac{Q}{qg}\right] \subseteq\left[\frac{Q}{g}, 1+\frac{Q}{g}\right].$$
We can now define the minor arcs to be
\begin{align}\label{eq2-2}
\mathfrak{m}(Q)=\left[\frac{Q+1}{g}, 1+\frac{Q+1}{g}\right]\setminus \mathfrak{M}(Q).
\end{align}
For any $\alpha \in \mathbb{R}$, let
$$
f(\alpha)=\sum_{0\leq n^k\leq g}\Lambda(n)e(\alpha n^k)
\quad
\text{and}
\quad h(\alpha)=\sum_{\substack{0\leq x\leq d
\\ 0\leq y \leq c}}e(\alpha (cx+dy)).
$$
By the orthogonality relation, it is clear that
\begin{align}\label{eq2-3}
\psi_{c,d} = \int_{0}^{1}f(\alpha)h(-\alpha)\dd\alpha=
\int_{\mathfrak{M}(Q)}f(\alpha)h(-\alpha)\dd\alpha+
\int_{\mathfrak{m}(Q)}f(\alpha)h(-\alpha)\dd\alpha.
\end{align}

\section{Estimates of the minor arcs}
The aim of this section is to prove the following proposition.
\begin{proposition}\label{proposition 3.1}
For estimates of the minor arcs, we have
\begin{align*}
\int_{\mathfrak{m}(Q)}f(\alpha)h(-\alpha)d\alpha\ll
g^{(1-\rho(k))/k+\varepsilon}\log^{4}g+g^{1/k}(\log g)^{\delta+4}
Q^{-\frac{1}{2}+\varepsilon},
\end{align*}
where $\varepsilon > 0$ is arbitrarily small.
\end{proposition}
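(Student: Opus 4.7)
The plan is to follow the Hardy--Littlewood framework of Ding, Zhai, and Zhao, combining Kumchev's minor arc estimate for exponential sums over prime powers with sup/integral estimates on the bilinear function $h(\alpha)$.

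First, I would apply Dirichlet's approximation theorem: for each $\alpha \in \mathfrak{m}(Q)$, taking approximating denominator $g/Q$, there exist coprime integers $a, q$ with $1 \le q \le g/Q$ and $|\alpha - a/q| \le Q/(qg)$. By the very definition of the major arcs in \eqref{eq2-1}, the case $q \le Q$ would place $\alpha$ in $\mathfrak{M}(Q)$, so on minor arcs one must have $Q < q \le g/Q$. I would then invoke Kumchev's theorem \cite{Kum}, which asserts that for $\alpha$ having such a Dirichlet approximation with $q$ in an admissible range, the exponential sum $f(\alpha) = \sum_{n^k \le g} \Lambda(n) e(\alpha n^k)$ satisfies
$$|f(\alpha)| \ll g^{(1-\rho(k))/k + \varepsilon},$$
where $\rho(k) > 0$ is Kumchev's saving exponent (and $\delta$ appears as a secondary logarithmic exponent in his formulation). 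This is the principal non-trivial input.

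Next, I would analyze $h(\alpha) = H_1(c\alpha) H_2(d\alpha)$ via its product structure, with $|H_1(\beta)| \le \min(d+1, \|\beta\|^{-1})$ and $|H_2(\beta)| \le \min(c+1, \|\beta\|^{-1})$. Two estimates are needed. The first is a mean-value bound of the shape $\int_{\mathfrak{m}(Q)} |h(\alpha)|\, d\alpha \ll (\log g)^{C}$, which exploits the fact that on the minor arcs both $\|c\alpha\|$ and $\|d\alpha\|$ are bounded below by functions of the Dirichlet denominator $q$; here the coprimality $\gcd(c,d) = 1$ guarantees that the reduced forms of $ca/q$ and $da/q$ have denominators $q/\gcd(c,q)$ and $q/\gcd(d,q)$ with $\gcd(c,q)\gcd(d,q) \mid q$. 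The second is a pointwise or mean-square bound of the form $|h(\alpha)| \ll Q^{-1/2+\varepsilon}$ times lower-order factors, valid on the portion of the minor arcs where Kumchev's saving is weaker.

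For the synthesis, I would partition $\mathfrak{m}(Q) = \mathfrak{m}_1 \cup \mathfrak{m}_2$ according to whether the Dirichlet denominator of $\alpha$ lies in Kumchev's admissible range. On $\mathfrak{m}_1$, the estimate $\sup_{\mathfrak{m}_1}|f|\cdot \int_{\mathfrak{m}_1}|h|$ yields the first term $g^{(1-\rho(k))/k+\varepsilon}(\log g)^4$. On $\mathfrak{m}_2$, Cauchy--Schwarz together with the Parseval bound $\int_0^1 |f|^2 d\alpha \ll g^{1/k}\log g$ and the mean-square bound on $h$ yields the second term $g^{1/k}(\log g)^{\delta+4}Q^{-1/2+\varepsilon}$. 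The main obstacle I anticipate is obtaining the clean $Q^{-1/2+\varepsilon}$ saving on $\mathfrak{m}_2$: it requires tracking how $\|c\alpha\|$ and $\|d\alpha\|$ depend simultaneously on $q$, $\gcd(c,q)$, and $\gcd(d,q)$, and then summing dyadically over $q$ in the range $(Q, g/Q]$ while keeping the log factors sharp. Once that structural estimate is in place, the remainder of the argument is essentially bookkeeping.
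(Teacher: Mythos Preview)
Your proposal misidentifies the source of the $Q^{-1/2+\varepsilon}$ saving, and the route you sketch to recover it from $h$ does not work.

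In the paper, Proposition~3.1 is proved by the single inequality
\[
\Bigl|\int_{\mathfrak{m}(Q)} f(\alpha)h(-\alpha)\,d\alpha\Bigr|
\le \sup_{\alpha\in\mathfrak{m}(Q)}|f(\alpha)|\cdot\int_0^1|h(-\alpha)|\,d\alpha,
\]
with $\int_0^1|h|\ll\log^2 g$ taken from \cite{DZZ}. The entire structure of the bound, including the $Q^{-1/2+\varepsilon}$ term, comes from the sup estimate on $f$. Kumchev's Theorem~3 does not give only the Weyl-type saving $g^{(1-\rho(k))/k+\varepsilon}$ that you quote; it gives a two-term bound of the shape
\[
\sum_{P<p\le 2P} e(\alpha p^k)\ll P^{1-\rho(k)+\varepsilon}+\frac{r^{\varepsilon}P(\log P)^{\delta}}{(r+P^k|r\alpha-b|)^{1/2}},
\]
where $b/r$ is the Dirichlet approximant. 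On $\mathfrak{m}(Q)$ one has either $r>Q$ or $g|r\alpha-b|>Q$, so the second term is $\ll g^{1/k}(\log g)^{\delta}Q^{-1/2+\varepsilon}$. Summing over dyadic ranges and reinstating the $\Lambda$-weight via partial summation gives the paper's Lemma~3.1 uniformly on all of $\mathfrak{m}(Q)$; no partition $\mathfrak{m}_1\cup\mathfrak{m}_2$ is needed.

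Your alternative plan---extracting $Q^{-1/2+\varepsilon}$ from $h$ via Cauchy--Schwarz and a mean-square bound on $h$---breaks down numerically. By orthogonality and $\gcd(c,d)=1$,
\[
\int_0^1|h(\alpha)|^2\,d\alpha=\#\{(x_1,y_1,x_2,y_2):cx_1+dy_1=cx_2+dy_2\}=(d+1)(c+1)\asymp g,
\]
so $\bigl(\int|f|^2\bigr)^{1/2}\bigl(\int|h|^2\bigr)^{1/2}\asymp g^{1/(2k)}g^{1/2}$, which is far larger than the target $g^{1/k}(\log g)^{O(1)}Q^{-1/2}$ for every $k\ge 1$. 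Restricting to a sub-arc $\mathfrak{m}_2$ does not help unless you can show its contribution to $\int|h|^2$ is of size $g^{1/k}Q^{-1}$, and nothing in your description of $\mathfrak{m}_2$ (``Dirichlet denominator outside Kumchev's admissible range'') forces that; indeed Kumchev's range already covers all of $\mathfrak{m}(Q)$. Likewise, no pointwise bound of the form $|h(\alpha)|\ll Q^{-1/2+\varepsilon}$ is available: $h$ attains values of order $g$ at rationals with small denominator, and the minor-arc condition on $\alpha$ gives no uniform lower bound on both $\|c\alpha\|$ and $\|d\alpha\|$ strong enough to force $|h|$ small. The fix is simply to use the full two-term Kumchev estimate for $f$ and the $L^1$ bound on $h$.
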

We need two lemmas listed below.
\begin{lemma}\label{lemma3.1}
Let $k\ge 2$, and $\rho(k)$ defined as follows,
\begin{equation*}
\rho(k)=
\begin{cases}
\frac{1}{8}, & \text{if}  \ \; k=2;\\
\frac{1}{14}, & \text{if}  \ \; k=3;\\
\frac{2}{3}\times 2^{-k}, &\text{if} \  \; k\ge 4.
\end{cases}
\end{equation*}
Then for any fixed $\varepsilon >0$,
we have
\begin{equation*}
\sup_{\alpha\in \mathfrak{m}(Q)}|f(\alpha)|
\ll
\log^{2}g\big(g^{(1-\rho(k))/k+\varepsilon}+g^{1/k}(\log g)^\delta
Q^{-\frac{1}{2}+\varepsilon}\big ),
\end{equation*}
where $\delta >0$ is an absolute constant
and the implied constant depends at most
on $k$ and $\varepsilon$.
\end{lemma}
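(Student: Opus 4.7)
The plan is to reduce the minor-arc bound to a Weyl--Vinogradov type estimate for the exponential sum $f(\alpha) = \sum_{n \le N} \Lambda(n) e(\alpha n^k)$ with $N = g^{1/k}$, and then to invoke the bound of Kumchev~\cite{Kum}.

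First, for any $\alpha \in \mathfrak{m}(Q)$, I would apply Dirichlet's approximation theorem with parameter $P = g/Q$ to produce coprime integers $a, q$ with $1 \le q \le g/Q$ and $|\alpha - a/q| \le 1/(qP) = Q/(qg)$. Since $\alpha$ lies outside every component of $\mathfrak{M}(Q)$ as defined in~\eqref{eq2-1}, the range $1 \le q \le Q$ is automatically excluded, hence
$$Q < q \le g/Q, \qquad \Big|\alpha - \frac{a}{q}\Big| \le \frac{Q}{qg} \le \frac{1}{q^2}.$$
In particular the approximation is of Dirichlet quality, which is exactly the hypothesis one needs for Weyl-type bounds.

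Next, I would invoke Kumchev's Weyl--Vinogradov bound for $f(\alpha)$: under the hypothesis $(a,q)=1$ and $|\alpha-a/q|\le 1/q^2$, his method (Vaughan's identity combined with the sharpest available $k$-th power Weyl inequalities --- the classical Weyl bound at $k=2$, Heath--Brown's cubic estimate at $k=3$, and Vinogradov's mean--value method for $k\ge 4$) yields a two--term estimate of the form
$$|f(\alpha)| \ll (\log N)^{2}\Big(N^{1-\rho(k)+\varepsilon} + N(\log N)^{\delta}\, q^{-1/2+\varepsilon}\Big)$$
with $\rho(k)$ matching the values listed in the statement. Substituting the lower bound $q > Q$ into the second term and changing back to the variable $g$ via $N = g^{1/k}$ reproduces the desired inequality, uniformly in $\alpha \in \mathfrak{m}(Q)$.

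The main obstacle is not the above case analysis, which is short and routine, but the precise extraction of Kumchev's bound in the stated two-term shape, including verification that the exponents $\rho(k)$ agree with those given in the lemma for each range of $k$. Once that bound is cited in the correct form, the Dirichlet approximation step and the exclusion of $q \le Q$ forced by the definition of $\mathfrak{m}(Q)$ close out the argument.
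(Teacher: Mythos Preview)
Your plan matches the paper's strategy---reduce to Kumchev's estimate and use the minor--arc condition to force the denominator large---but two steps are handled more carefully in the paper and are not quite as you state them.

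First, Kumchev's Theorem~3 is not formulated under the bare hypothesis $(a,q)=1$, $|\alpha-a/q|\le 1/q^2$; it requires a Dirichlet approximation with a specific parameter $R$ depending on $k$ (the paper takes $R=g^{3/(2k)}$ for $k=2$ and $R=g^{(k-2\rho(k))/(2k-1)}$ for $k\ge 3$). Your choice $P=g/Q$ may produce $q$ as large as $g/Q$, well outside the range Kumchev allows, so his bound is not directly applicable to that approximation. The paper therefore runs Dirichlet with the correct $R$, applies Kumchev to obtain
\[
\sum_{X<p\le 2X} e(\alpha p^k)\ll X^{1-\rho(k)+\varepsilon}+\frac{r^{\varepsilon}X(\log X)^{\delta}}{(r+X^{k}|r\alpha-b|)^{1/2}},
\]
and then splits into cases: if $r>Q$ or $g|r\alpha-b|>Q$ the second term is $\ll X(\log X)^{\delta}Q^{-1/2+\varepsilon}$; the remaining case $r\le Q$, $|\alpha-b/r|\le Q/(rg)$ would place $\alpha$ in $\mathfrak M(Q)$ and is vacuous. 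Your single--approximation shortcut reaches the same conclusion once the correct $R$ is used, but as written it overstates what Kumchev's theorem provides.

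Second, Kumchev's result bounds the dyadic sum $\sum_{X<p\le 2X}e(\alpha p^k)$ over primes, not $f(\alpha)=\sum_{n\le g^{1/k}}\Lambda(n)e(\alpha n^k)$ directly. The paper first removes prime powers $p^m$ with $m\ge 2$ (contributing $O(g^{1/(2k)}\log^2 g)$), strips the $\log p$ weight by partial summation, and then dyadically decomposes $p\le g^{1/k}$ to feed into Kumchev's estimate; this is where the extra $\log^2 g$ in the lemma's statement arises. These reductions are routine, and you flag the ``precise extraction'' as the main obstacle, but they are not optional.
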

\begin{proof}
Plugging in the definition of $\Lambda(n)$, we have
\begin{equation*}
 f(\alpha)=\sum_{m=1}^{\infty}\sum_{p^m \leq g^{1/k}}\Lambda(p^m)e(\alpha p^{mk})
 =\sum_{m\leq \frac{\log g}{k\log 2}}\sum_{p \leq g^{1/{mk}}}\log p\,e\big(\alpha p^{mk}\big).
\end{equation*}
For $m\geq 2$, it is clear that
\begin{equation*}
 \sum_{2\leq m\leq \frac{\log g}{k\log 2}}\sum_{p \leq g^{1/{mk}}}
 \log p\,e\big(\alpha p^{mk}\big)\ll g^{1/{2k}}\log^2g.
\end{equation*}
Thus
\begin{equation}\label{f1}
 f(\alpha)=\sum_{p \leq g^{1/k}}\log g\,e\big(\alpha p^k\big)+O(g^{1/{2k}}\log^2g).
\end{equation}
On applying the partial summation formula, we have
\begin{equation}\label{f2}
\sum_{p \leq g^{1/k}}\log p\,e\big(\alpha p^k\big)=
\frac{\log g}{k}\sum_{p \leq g^{1/k}}e\big(\alpha p^k\big)-\int_{2}^{g^{1/k}}
\sum_{p \leq \gamma}e\big(\alpha p^k\big)\frac{\dd\gamma}{\gamma}.
\end{equation}
By the Dirichlet approximation theorem,
there exist $b\in \mathbb{Z}$ and $r\in \mathbb{N}$ such that
$$(b,r)=1, \quad 1\leq r\leq R \quad \text{and}
\quad \left|\alpha-\frac{b}{r}\right|\leq \frac{1}{rR},$$
where
\begin{equation*}
R=\begin{cases}
g^{3/2k}, & \text{if} \ \; k=2;\\
g^{(k-2\rho(k))/(2k-1)}, &\text{if}\ \; k\ge 3.
\end{cases}
\end{equation*}
Then by Theorem 3 in~\cite{Kum}, for any fixed $\varepsilon>0$,  we conclude
\begin{equation}\label{f bound1}
\sum_{g^{1/k} \leq p \leq 2g^{1/k}}e\big(\alpha p^k\big)\ll
g^{(1-\rho(k))/k+\varepsilon}
+\frac{r^\varepsilon g^{1/k}(\log g)^\delta}{(r+g|r\alpha-b|)^{1/2}},
\end{equation}
where $\delta>0$ is an absolute constant.
If $r > Q$, then
\begin{equation}\label{f bound2}
\sum_{g^{1/k} \leq p \leq 2g^{1/k}}e\big(\alpha p^k\big)\ll
g^{(1-\rho(k))/k+\varepsilon}+g^{1/k}(\log g)^\delta
Q^{-\frac{1}{2}+\varepsilon}.
\end{equation}
Similarly, if $1\leq r \leq Q$ and $|\alpha-b/r|> \frac{Q}{rg}$,
then~\eqref{f bound2} hold as well. From now on, we assume that
\begin{equation}\label{minor1}
1\leq  r \leq Q \quad \text{and}\quad
 \left|\alpha-\frac{b}{r}\right| \leq \frac{Q}{rg}.
\end{equation}
By the Dirichlet approximation theorem, there exist $a\in \mathbb{Z}$ and $q\in \mathbb{N}$ such that
\begin{equation}\label{minor2}
(a,q)=1, \quad 1\leq q\leq g^{1/2} \quad \text{and} \quad \left|\alpha-\frac{a}{q}\right|\le \frac{1}{qg^{1/2}}.
\end{equation}
We deduce from \eqref{minor1} and \eqref{minor2} that
\begin{align*}
\left|\frac{b}{r}-\frac{a}{q}\right|\leq &\left|\alpha-\frac{b}{r}\right|+\left|\alpha-\frac{a}{q}\right|
\\ \leq &\frac{Q}{rg} +\frac{1}{qg^{1/2}}
=\frac{1}{rq}\Big(\frac{qQ}{g} +\frac{r}{g^{1/2}}\Big)
\\ \leq &\frac{Q}{rqg^{1/2}} < \frac{1}{rq}.
\end{align*}
It follows that
\begin{align}\label{type1linkbraq1}
a=b \quad \text{and} \quad r=q%\frac{s}{t}=\frac{a}{q} \quad \text{and} \quad q=\frac{t}{(t,s)}.
\end{align}
We obtain from~\eqref{f bound1} and~\eqref{type1linkbraq1} that
\begin{equation}\label{f bound3}
\sum_{g^{1/k} \leq p \leq 2g^{1/k}}e\big(\alpha p^k\big)\ll
g^{(1-\rho(k))/k+\varepsilon}
+\frac{r^\varepsilon g^{1/k}(\log g)^\delta}{(q+g|q\alpha-a|)^{1/2}},
\end{equation}
Since $\alpha\in\mathfrak{m}(Q)$, we further obtain
\begin{equation*}
\sum_{g^{1/k} \leq p \leq 2g^{1/k}}e\big(\alpha p^k\big)\ll
g^{(1-\rho(k))/k+\varepsilon}+g^{1/k}(\log g)^\delta
Q^{-\frac{1}{2}+\varepsilon}.
\end{equation*}
Then we deduce that
\begin{equation*}
\begin{split}
\sum_{p \leq g^{1/k}}e\big(\alpha p^k\big)&=\sum_{1\leq p\leq g^{1/k}/Q}
e\big(\alpha p^k\big)+ \sum_{g^{1/k}/Q<p\leq g^{1/k}}e\big(\alpha p^k\big)\\
&\ll \frac{g^{1/k}}{Q}+\sum_{1\leq \kappa \leq \frac{\log Q}{\log 2}}
\bigg|\sum_{\frac{g^{1/k}}{2^{\kappa-1}}
<p\leq \frac{g^{1/k}}{2^\kappa}}e\big(\alpha p^k\big)\bigg|\\
&\ll g^{(1-\rho(k))/k+\varepsilon}
+g^{1/k}(\log g)^\delta Q^{-\frac{1}{2}+\varepsilon}.
\end{split}
\end{equation*}
Employing this estimate, we deduce by~\eqref{f1} and~\eqref{f2} that
\begin{equation*}
\sup_{\alpha\in \mathfrak{m}(Q)}|f(\alpha)|
\ll g^{(1-\rho(k))/k+\varepsilon}\log^{2}g+g^{\frac{1}{k}}(\log g)^{\delta+2}
Q^{-\frac{1}{2}+\varepsilon}.
\end{equation*}

This completes the proof of Lemma~\ref{lemma3.1}.
\end{proof}
\begin{lemma}\label{lemma3.2}
We have
$$\int_{0}^{1}|h(-\alpha)|\dd\alpha\ll \log^2 g.$$
\end{lemma}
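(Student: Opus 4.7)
The plan is to factor $h$ as a product of two (modified) Dirichlet kernels and then exploit the coprimality of $c$ and $d$. Since $|h(-\alpha)|=|h(\alpha)|$ and
\begin{equation*}
h(\alpha)=S_1(\alpha)\,S_2(\alpha), \qquad S_1(\alpha):=\sum_{x=0}^{d}e(c\alpha x),\quad S_2(\alpha):=\sum_{y=0}^{c}e(d\alpha y),
\end{equation*}
the task reduces to bounding $\int_0^1|S_1(\alpha)S_2(\alpha)|\,\dd\alpha$. A crude Cauchy--Schwarz would only yield $O(\sqrt{g})$; the required saving must come from the fact that $|S_1|$ and $|S_2|$ concentrate near $\alpha=j/c$ and $\alpha=j'/d$ respectively, which are well separated since $(c,d)=1$.

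First substitute $\tau=c\alpha$, split the resulting interval $[0,c]$ into unit pieces, and exploit the $1$-periodicity of the auxiliary kernels $\tilde S_1(\tau):=\sum_{x=0}^{d}e(\tau x)$ and $\tilde S_2(v):=\sum_{y=0}^{c}e(vy)$. After reindexing, one obtains
\begin{equation*}
\int_0^1|h(\alpha)|\,\dd\alpha=\frac{1}{c}\int_0^1|\tilde S_1(\sigma)|\sum_{k=0}^{c-1}\left|\tilde S_2\left(\frac{d\sigma}{c}+\frac{dk}{c}\right)\right|\dd\sigma.
\end{equation*}
Because $(c,d)=1$, the residues $dk\bmod c$ permute $\{0,1,\ldots,c-1\}$, so for each fixed $\sigma$ the $c$ arguments $d\sigma/c+dk/c\bmod 1$ appearing in the inner sum form a translate of the equally spaced grid $\{0,1/c,\ldots,(c-1)/c\}$.

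The main step is then to bound this inner sum uniformly in $\sigma$. Using the standard geometric-sum bound $|\tilde S_2(v)|\le\min(c+1, 1/(2\|v\|))$, and noting that among $c$ equally spaced points in $[0,1)$ at most two have distance $\le 1/c$ from an integer while the $k$-th smallest distance for $k\ge 3$ is $\ge \lfloor(k-1)/2\rfloor/c$, a direct enumeration gives
\begin{equation*}
\sum_{k=0}^{c-1}\left|\tilde S_2\left(\frac{d\sigma}{c}+\frac{dk}{c}\right)\right|\le 2(c+1)+\sum_{k=3}^{c}\frac{c}{k-2}\ll c\log c
\end{equation*}
uniformly in $\sigma$. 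Combining this with the classical Lebesgue-constant bound $\int_0^1|\tilde S_1(\sigma)|\,\dd\sigma\ll\log d$ then yields $\int_0^1|h(-\alpha)|\,\dd\alpha\ll\log c\cdot\log d\ll\log^2 g$, as required. The only non-routine step is verifying the equally-spaced-sorting claim, which is a short direct check.
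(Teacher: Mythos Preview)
Your argument is correct. The paper does not actually prove this lemma: its entire proof is the single sentence ``This is Lemma 3.2 in \cite{DZZ}.'' So there is no real comparison of methods to make against the present paper; you have supplied a self-contained proof where the authors chose to cite one.

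For the record, your route is the natural one and works as written. The factorisation $h=S_1S_2$, the change of variable $\tau=c\alpha$ followed by splitting $[0,c]$ into unit intervals, and the observation that coprimality of $c$ and $d$ turns the inner sum into a sum of $|\tilde S_2|$ over a translated grid $\{\theta+j/c:0\le j<c\}$ are all correct. The sorting claim (that the $k$-th smallest value of $\|\theta+j/c\|$ is at least $\lfloor(k-1)/2\rfloor/c$ for $k\ge 3$) is easily checked after reducing to $|\theta|\le 1/(2c)$, and combined with $|\tilde S_2(v)|\le\min(c+1,\tfrac{1}{2\|v\|})$ it gives the uniform bound $\ll c\log c$ for the inner sum. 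The outer integral is then handled by the standard Lebesgue-constant estimate $\int_0^1|\tilde S_1|\ll\log d$, and $\log c\cdot\log d\ll(\log g)^2$ since $c<d\le g+2$. One cosmetic remark: your assertion that ``at most two'' of the grid points lie within $1/c$ of an integer can fail at the endpoint (e.g.\ when $\theta=0$ there are three such points, at distances $0,1/c,1/c$), but this does not affect the bound since you only use the trivial estimate $c+1$ on those terms anyway.
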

\begin{proof}
This is Lemma 3.2 in~\cite{DZZ}.
\end{proof}
\begin{proof}[Proof of Proposition \ref{proposition 3.1}]
The treatment of the minor arcs benefits from the following trivial estimates
$$\bigg|\int_{\mathfrak{m}(Q)}f(\alpha)h(-\alpha)\dd \alpha
\bigg|\leq \sup_{\alpha\in \mathfrak{m}(Q)}|f(\alpha)|\int_{0}^{1}|h(-\alpha)|\dd \alpha$$
together with Lemma~\ref{lemma3.1} and Lemma~\ref{lemma3.2}.
\end{proof}

\section{Calculations of the major arcs}

We now calculate the integral on the major
arcs.
\begin{proposition}\label{proposition 4.1}
For $Q< c^{1/3}$, we have
\begin{align*}
\int_{\mathfrak{M}(Q)}f(\alpha)h(-\alpha)\dd\alpha=\mathcal{N}
+O\Big(g^{1/k}Q^2\exp\big(-\kappa_1\sqrt{\log g}\big)+
g^{1/k}(\log g)^3Q^{-2^{1-k}+\varepsilon}+dQ^3\Big),
\end{align*}
where $\kappa_1>0$ is an absolute constant.
\end{proposition}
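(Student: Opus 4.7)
The plan is to apply the classical Hardy--Littlewood dissection: on each major arc approximate $f$ by its Siegel--Walfisz main term, isolate the $q=1$ contribution, and recognize it as $\mathcal{N}$ via the key identity
\begin{equation*}
\int_0^1 V(\alpha) h(-\alpha) \, \dd\alpha = \mathcal{N},
\end{equation*}
where $V(\beta) = \sum_{n \leq g^{1/k}} e(\beta n^k)$. The identity follows from orthogonality together with Sylvester's uniqueness: since $g = cd-c-d < cd$, every representable $n^k \leq g$ has exactly one representation $n^k = cx+dy$ in the box $0\leq x\leq d$, $0\leq y\leq c$.

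On $\mathfrak{M}(a,q)$ I set $\alpha = a/q + \beta$ with $|\beta| \leq Q/(qg)$ and decompose the defining sum of $f$ by residue classes modulo $q$; the terms with $(r,q)>1$ are negligible since $\Lambda$ is supported on prime powers. For each coprime class, Siegel--Walfisz together with partial summation against $e(\beta n^k)$ yields
\begin{equation*}
f(a/q + \beta) = \frac{S_k(a,q)}{\phi(q)} V(\beta) + O\bigl(g^{1/k} \exp(-\kappa'\sqrt{\log g}) \cdot (1+|\beta|g)\bigr),
\end{equation*}
with $S_k(a,q) = \sum_{1\leq r \leq q,\,(r,q)=1} e(ar^k/q)$. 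Integrating this error against $h(-\alpha)$ (estimated using either Lemma~\ref{lemma3.2} or the trivial $|h|\ll g$ combined with the arc length $Q/(qg)$), and summing over the $O(Q^2)$ major arcs produces the first error term $g^{1/k} Q^2 \exp(-\kappa_1 \sqrt{\log g})$.

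The resulting main term is $M = \sum_{q \leq Q}\sum_{(a,q)=1} \phi(q)^{-1} S_k(a,q) \int_{|\beta|\leq Q/(qg)} V(\beta)h(-a/q-\beta)\,\dd\beta$. The $q=1$ contribution simplifies, by $1$-periodicity of $V$ and $h$, to $\int_{|\beta|\leq Q/g} V(\beta) h(-\beta)\,\dd\beta$, and extending to the full period $[0,1]$ yields $\mathcal{N}$ minus the integral of $Vh$ over the complementary set $\mathfrak{m}(Q) \cup \bigcup_{q\geq 2,(a,q)=1} \mathfrak{M}(a,q)$. On $\mathfrak{m}(Q)$, the classical Weyl inequality applied to $V$ (via Dirichlet's theorem with denominator $g/Q$, analogously to Lemma~\ref{lemma3.1}) gives $|V(\alpha)| \ll g^{1/k} Q^{-2^{1-k}+\varepsilon}$, and combining with Lemma~\ref{lemma3.2} yields the second error $g^{1/k}(\log g)^{3} Q^{-2^{1-k}+\varepsilon}$.

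The main technical obstacle is the uniform control of all the $q\geq 2$ contributions, both from the direct $q\geq 2$ part of $M$ and from the complementary integrals $\int_{\mathfrak{M}(a,q)} V h\,\dd\alpha$. The key structural ingredient is the factorization $h(-\alpha) = H_1(-\alpha)H_2(-\alpha)$ combined with $(c,d)=1$: on any $\mathfrak{M}(a,q)$ with $q\geq 2$, at least one of $q\nmid c$ or $q\nmid d$ holds, forcing the corresponding geometric-sum factor to satisfy $|H_i(-\alpha)|\ll q$, while the other factor is bounded trivially by $d$. This yields the pointwise bound $|h|\ll dq$ on such arcs, and combining with $|V|\leq g^{1/k}$ and the arc length $Q/(qg)$, summing over $\sum_{q\leq Q}\phi(q) \ll Q^2$ arcs produces the third error $dQ^3$. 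A case split according to $q\mid c$, $q\mid d$, or $\gcd(q,cd)=1$ is needed here to take full advantage of the coprimality of $c$ and $d$.
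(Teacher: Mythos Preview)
Your proposal is correct and follows essentially the same route as the paper: isolate the $q=1$ arc, replace $f$ by the unweighted sum $V=F$ there, extend to the full period to recover $\mathcal{N}$, and control both the minor-arc remainder of $\int Vh$ (via a Weyl-type bound on $V$) and all $q\geq 2$ major arcs (via the coprimality bound $|h|\ll qd$, which is exactly the paper's Lemma~\ref{major arcs2}). The only cosmetic difference is that you invoke Siegel--Walfisz on every arc with the local factor $S_k(a,q)/\phi(q)$, whereas the paper uses only the prime number theorem on the single $q=1$ arc (Lemma~\ref{major arcs1}); since the $q\geq 2$ contributions are discarded wholesale by the $|h|\ll qd$ bound anyway, the extra arithmetic input buys nothing here.
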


For any $\alpha \in \mathbb{R}$, we define
$$F(\alpha):=\sum_{0\leq n^k\leq g}e(\alpha n^k).$$
\begin{lemma}\label{major arcs1}
For any real number $\beta$, we have
\begin{equation*}
\int_{-\frac{Q}{g}}^{\frac{Q}{g}}f(\beta)h\left(-\beta\right)\dd\beta
=\int_{-\frac{Q}{g}}^{\frac{Q}{g}}F(\beta)h\left(-\beta\right)\dd\beta
+O\left(g^{1/k}Q^2\exp\left(-\kappa_1\sqrt{\log g}\right)\right),
\end{equation*}
where $\kappa_1>0$ is an absolute constant.
\end{lemma}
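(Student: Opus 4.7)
The plan is to show that on the small interval around $0$, the generating function $f(\beta)$ is well approximated by its unweighted analogue $F(\beta)$; since the approximation comes with an $\exp(-\kappa\sqrt{\log g})$ saving, one can afford to lose powers of $Q$ and a factor of $g^{1/k}$. Writing $N=\lfloor g^{1/k}\rfloor$, I would begin by noting
\[
f(\beta)-F(\beta) = \sum_{1\le n\le N}(\Lambda(n)-1)\,e(\beta n^k) + O(1),
\]
where the $O(1)$ absorbs the $n=0$ discrepancy.

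Next, I would apply Abel summation. Let $A(u)=\psi(u)-\lfloor u\rfloor$; the prime number theorem with the classical de la Vall\'ee Poussin error term (i.e.\ the $q=1$ case of the Siegel--Walfisz theorem) gives $A(u)\ll u\exp(-\kappa_0\sqrt{\log u})$ for $u\ge 2$. Partial summation against the smooth weight $e(\beta u^k)$ yields
\[
\sum_{1\le n\le N}(\Lambda(n)-1)e(\beta n^k) = A(N)e(\beta N^k) - 2\pi i k\beta\int_1^N A(u)u^{k-1}e(\beta u^k)\,\dd u.
\]
Taking absolute values, the boundary term is $\ll g^{1/k}\exp(-\kappa_1\sqrt{\log g})$, and the integral is $\ll |\beta|\,N^{k+1}\exp(-\kappa_1\sqrt{\log g})$. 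Since $|\beta|\le Q/g$ and $N^{k+1}\le g^{(k+1)/k}$, the integral contribution is $\ll Q g^{1/k}\exp(-\kappa_1\sqrt{\log g})$, and combining with the boundary term gives the uniform pointwise estimate
\[
\sup_{|\beta|\le Q/g}\bigl|f(\beta)-F(\beta)\bigr|\ll Q\,g^{1/k}\exp(-\kappa_1\sqrt{\log g}).
\]

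To convert this pointwise bound into the stated integrated bound, I would use the trivial estimate $|h(-\beta)|\le h(0)=(c+1)(d+1)\ll g$, which holds because $g=cd-c-d\sim cd$ as $c\to\infty$. Since the interval $[-Q/g,Q/g]$ has length $2Q/g$, this trivially yields $\int_{-Q/g}^{Q/g}|h(-\beta)|\,\dd\beta\ll Q$. Multiplying the two bounds produces the required error $O\bigl(g^{1/k}Q^2\exp(-\kappa_1\sqrt{\log g})\bigr)$. The main technical point in the argument is the Abel summation step, where one must carefully track the factor $k\beta u^{k-1}$ coming from differentiating $e(\beta u^k)$ and verify that, despite this factor growing with $k$, the resulting integral is still tamed by the super-polynomial decay from PNT; the remaining steps are routine.
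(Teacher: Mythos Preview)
Your proposal is correct and follows essentially the same route as the paper: both arguments apply Abel summation to $\sum_{n\le g^{1/k}}(\Lambda(n)-1)e(\beta n^k)$, invoke the prime number theorem with de la Vall\'ee Poussin error term to bound the partial sums $\psi(t)-t$, and then convert the resulting pointwise estimate $f(\beta)-F(\beta)\ll g^{1/k}(1+|\beta|g)\exp(-\kappa_1\sqrt{\log g})$ into the integrated bound via the trivial inequality $|h(-\beta)|\ll g$ and the length $2Q/g$ of the interval. The only cosmetic differences are your explicit handling of the $n=0$ term and your use of $\psi(u)-\lfloor u\rfloor$ in place of $\psi(t)-t$, neither of which affects the argument.
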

\begin{proof}
Let  $a_n=\Lambda(n)-1$. Then for any real number $\beta$, we have
\begin{equation*}
 f(\beta)-F(\beta)
=\sum_{0\leq n^k\leq g}a_ne\big(\beta n^k\big).
\end{equation*}
By partial summation, we have
\begin{equation*}
  \begin{split}
\sum_{0\leq n^k\leq g}a_ne\big(\beta n^k\big)
&=\sum_{0\leq n\leq g^{1/k}}a_ne\big(\beta n^k\big)\\
&=e\big(\beta g^k\big)\sum_{0\leq n\leq g^{1/k}}a_n-2\pi ik\beta\int_{0}^{g^{1/k}}\left(\sum_{0\leq n\leq t}a_n\right)
e\big(\beta t^k\big)t^{k-1}\dd t.
  \end{split}
\end{equation*}
Using the prime number theorem, there exists
some absolute constant $\kappa_1 > 0$ so that
$$
\sum_{0\leq n\leq t}a_n=\psi(t)-t
\ll t\exp\left(-\kappa_1\sqrt{\log t}\right).
$$
We deduce that
\begin{equation*}
f(\beta)=F(\beta)+O\left(g^{1/k}(1+k|\beta|g)
\exp\left(-\kappa_1\sqrt{\log g}\right)\right).
\end{equation*}
Then we arrive at
\begin{equation*}
\int_{-\frac{Q}{g}}^{\frac{Q}{g}}f(\beta)h\left(-\beta\right)\dd\beta
=\int_{-\frac{Q}{g}}^{\frac{Q}{g}}F(\beta)h\left(-\beta\right)\dd\beta
+\mathcal{R}(\beta),
\end{equation*}
where the error term $\mathcal{R}(\beta)$ can be bounded easily by the trivial estimates as
\begin{equation*}
\begin{split}
\mathcal{R}(\beta)&\ll \int_{-\frac{Q}{g}}^{\frac{Q}{g}}
g^{1/k}(1+k|\beta|g)\exp\left(-\kappa_1\sqrt{\log g}\right)|h(-\beta)|\dd\beta\\
&\ll\int_{-\frac{Q}{g}}^{\frac{Q}{g}}g^{1/k}(1+k|\beta|g)
\exp\left(-\kappa_1\sqrt{\log g}\right)g\dd\beta\\
&\ll g^{1+1/k}Q\exp\left(-\kappa_1\sqrt{\log g}\right)\int_{-\frac{Q}{g}}^{\frac{Q}{g}}1\dd\beta\\
&\ll g^{1/k}Q^2\exp\left(-\kappa_1\sqrt{\log g}\right).
\end{split}
\end{equation*}
This completes the proof of Lemma~\ref{major arcs1}.
\end{proof}
\begin{lemma}\label{major arcs2}
Let $2\leq q \leq Q$, $(a,q)=1$ and $|\beta|\leq Q/(qg)$. For $Q < c^{1/3}$, we have
\begin{align*}
h\left(-\frac{a}{q}-\beta\right)\ll qd.
\end{align*}
\end{lemma}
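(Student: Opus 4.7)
The plan is to factor $h(-a/q-\beta)$ as a product of two geometric sums and to exploit the coprimality of $c$ and $d$ to guarantee that at least one of these sums is short. Writing $\alpha = a/q + \beta$, I have
\begin{equation*}
h\Big(-\frac{a}{q}-\beta\Big) = S_1 \cdot S_2, \qquad S_1 := \sum_{0\le x\le d} e(-\alpha c x),\qquad S_2 := \sum_{0\le y\le c} e(-\alpha d y),
\end{equation*}
and the standard geometric-sum bound yields $|S_1|\le \min(d+1,\, 1/(2\|\alpha c\|))$ and $|S_2|\le \min(c+1,\, 1/(2\|\alpha d\|))$.

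Since $q\ge 2$ and $(c,d)=1$, the integer $q$ is coprime to at least one of $c, d$, and I would split into two cases accordingly. Suppose first that $q\nmid c$, and set $e = (q,c)<q$. Then $ac/q$ reduces to a fraction with denominator $q/e\ge 2$ and numerator coprime to it (using $(a,q)=1$), so $\|ac/q\|\ge e/q\ge 1/q$. The perturbation term is negligible: using $g\ge cd/2$ (valid for $c$ sufficiently large) together with $c>Q^3$,
\begin{equation*}
|\beta c| \le \frac{Qc}{qg} \le \frac{2Q}{qd} \le \frac{2}{qQ^2} \le \frac{1}{2q}.
\end{equation*}
Hence $\|\alpha c\|\ge 1/(2q)$, which gives $|S_1|\le q$, and the trivial $|S_2|\le c+1\le d+1$ produces $|h|\ll qd$. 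If instead $q\mid c$, then $(q,d)\mid (c,d)=1$, so the identical reasoning applied to $ad/q$ yields $\|\alpha d\|\ge 1/(2q)$ and $|S_2|\le q$; combined with the trivial $|S_1|\le d+1$, this again gives $|h|\ll qd$.

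The only genuine obstacle is the perturbation bookkeeping, which is what forces the hypotheses $Q<c^{1/3}$ and $c$ large; a naive application of the geometric-sum bound alone would overshoot because the rational $a/q$ is perturbed off $\tfrac{1}{q}\mathbb{Z}$ by $\beta$. Once $|\beta c|$ (respectively $|\beta d|$) is squeezed below $1/(2q)$, the coprimality $(c,d)=1$ automatically supplies the short-factor estimate on one side, and the trivial bound on the other side delivers the claim.
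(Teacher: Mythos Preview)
Your proof is correct and follows essentially the same approach as the paper: factor $h$ as a product of two geometric sums, use coprimality of $c$ and $d$ to ensure $q$ fails to divide at least one of them, and combine the resulting lower bound $\|\alpha c\|\ge 1/(2q)$ (or $\|\alpha d\|\ge 1/(2q)$) with the trivial bound on the other factor. Your perturbation bookkeeping is in fact more explicit than the paper's, which simply asserts the inequality $\|c(-a/q-\beta)\|\ge 1/(2q)$ under $q\nmid c$ without writing out the bound on $|\beta c|$.
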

\begin{proof}
Recall that
\begin{align*}
	h(-\alpha)=\sum_{x\leq d}e(-\alpha cx)\sum_{y\leq c}e(-\alpha dy)\ll \min\{d, ||c\alpha||^{-1}\}\min\{c, ||d\alpha||^{-1}\},
\end{align*}
hence we have
\begin{align*}
h\left(-\frac{a}{q}-\beta\right)\ll \min\left\{d, \left\Vert c\left(-\frac{a}{q}-\beta\right)\right\Vert^{-1}\right\}\min\left\{c, \left\Vert d\left(-\frac{a}{q}-\beta\right)\right\Vert^{-1}\right\}.
\end{align*}
Since $(a,q)=1$, we would have
\begin{align*}
 \left\Vert c\left(-\frac{a}{q}-\beta\right)\right\Vert\geq \frac{1}{2q} \quad \text{if} \quad q\nmid c
\end{align*}
and
\begin{align*}
\left\Vert d\left(-\frac{a}{q}-\beta\right)\right\Vert\geq \frac{1}{2q} \quad \text{if} \quad q\nmid d
\end{align*}
for $q\ge 2$ and $|\beta|\leq Q/(qg)$, provided that $Q\leq c^{1/3}$.
Recall that $(c,d)=1$, thus at least one of the above
inequalities is admissible. Therefore, for all $2\leq q\leq Q$, $(a,q)=1$
and $|\beta|\leq Q/(qg)$ we have
\begin{align*}
h\left(-\frac{a}{q}-\beta\right)\ll qd.
\end{align*}
This completes the proof of Lemma~\ref{major arcs2}.
\end{proof}

\begin{lemma}\label{major arcs3}
For any real number $\beta$, we have
\begin{equation*}
\mathcal{N}
=\int_{-\frac{Q}{g}}^{\frac{Q}{g}}F(\beta)h\left(-\beta\right)\dd\beta
+O\big(g^{1/k}(\log g)^3Q^{-2^{1-k}+\varepsilon}+dQ^3\big).
\end{equation*}
\end{lemma}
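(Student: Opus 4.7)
The plan is to interpret $\mathcal{N}$ via orthogonality as an integral of $F(\alpha) h(-\alpha)$ over the unit interval, and then to show that all of the integral except a tiny arc around $\alpha=1$ is absorbed into the error term. First, by orthogonality together with Sylvester's structural fact (any $n^k\leq g$ that is representable as $cx+dy$ with $x,y\geq 0$ has a unique such representation, and it automatically satisfies $0\leq x\leq d-1$, $0\leq y\leq c-1$ since $cd>g$), one has
\begin{equation*}
    \mathcal{N}=\int_0^1 F(\alpha)h(-\alpha)\,\dd\alpha.
\end{equation*}
Decomposing the unit interval (modulo translation) into the central arc $\{|\alpha-1|\leq Q/g\}$ coming from $q=a=1$, the non-central major arcs $\mathfrak{M}(Q)$ with $q\geq 2$, and the minor arcs $\mathfrak{m}(Q)$, and using $1$-periodicity of $F$ and $h$, the central-arc contribution is exactly $\int_{-Q/g}^{Q/g} F(\beta)h(-\beta)\,\dd\beta$, i.e.\ the claimed main term. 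It remains to bound the other two pieces.

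For the non-central major arcs, Lemma~\ref{major arcs2} gives $|h(-a/q-\beta)|\ll qd$ uniformly for $2\leq q\leq Q$, $(a,q)=1$, $|\beta|\leq Q/(qg)$. Combined with the trivial bound $|F(\alpha)|\ll g^{1/k}$ and the arc length $2Q/(qg)$, summation over the $\varphi(q)$ residues $a$ and over $q=2,\ldots,Q$ yields a total contribution
\begin{equation*}
\ll \sum_{2\leq q\leq Q}\varphi(q)\cdot\frac{Q}{qg}\cdot qd\cdot g^{1/k}\ll Q^{3}d\,g^{1/k-1}\leq dQ^{3},
\end{equation*}
matching the second error term.

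For the minor arcs, the natural step is to apply Weyl's inequality to $F(\alpha)=\sum_{n\leq g^{1/k}} e(\alpha n^k)$. By Dirichlet's theorem with denominator $g/Q$, every $\alpha\in\mathfrak{m}(Q)$ admits an approximation $|\alpha-a/q|\leq Q/(qg)$ with $q>Q$ (for if $q\leq Q$ we would have $\alpha\in\mathfrak{M}(Q)$). Weyl's bound then produces
\begin{equation*}
    \sup_{\alpha\in\mathfrak{m}(Q)}|F(\alpha)|\ll g^{1/k+\varepsilon}Q^{-2^{1-k}},
\end{equation*}
and combined with Lemma~\ref{lemma3.2}, this yields a minor-arc contribution of order $g^{1/k}(\log g)^{3}Q^{-2^{1-k}+\varepsilon}$, where the $g^{\varepsilon}$ is absorbed into $Q^{\varepsilon}$ (since eventually $Q$ will be chosen as a fixed power of $g$) and one extra $\log g$ is kept as slack. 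Collecting the three pieces then proves the lemma. The main technical obstacle is the Weyl step: one must verify that the $1/q$ term dominates the other two summands in $(1/q+g^{-1/k}+q/g)^{2^{1-k}}$ throughout $\mathfrak{m}(Q)$, which is where the constraint $Q<c^{1/3}$ inherited from the major-arc setup plays its role.
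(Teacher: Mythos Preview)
Your overall structure matches the paper's: orthogonality, then the three-fold split into the central arc $q=a=1$, the non-central major arcs $q\geq 2$, and the minor arcs, with the $q\geq 2$ piece handled via Lemma~\ref{major arcs2} exactly as the paper does. The gap is in the minor-arc step.

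Standard Weyl's inequality gives
\[
\sup_{\alpha\in\mathfrak{m}(Q)}|F(\alpha)|\ll g^{1/k+\varepsilon}Q^{-2^{1-k}},
\]
and the factor $g^{\varepsilon}$ here is \emph{not} absorbable into $Q^{\varepsilon}$ or into any power of $\log g$, because $Q$ is not taken to be a fixed power of $g$. In the paper's application (Section~\ref{S5}) one sets $Q=(\log g)^{m}$; more fundamentally, the hypothesis $Q<c^{1/3}$ required by Lemma~\ref{major arcs2} forces $Q$ to be at most a log-power in the regime $c\geq (\log g)^{m+1}$ where the circle method is invoked (smaller $c$ are handled by Siegel--Walfisz, which in turn cannot reach $c$ as large as a fixed power of $g$). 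With $Q$ only of log-power size, your minor-arc bound becomes $g^{1/k+\varepsilon}(\log g)^{2}Q^{-2^{1-k}}$, which is not $o(g^{1/k})$ and so does not yield the stated error term. The paper circumvents this by invoking a sharper pointwise estimate (Lemma~6.4 of \cite{LZ}) of the shape
\[
F(\alpha)\ll g^{(1-2^{1-k})/k+\varepsilon}+\frac{t^{\varepsilon}g^{1/k}\log g}{(t+g|t\alpha-s|)^{2^{1-k}}},
\]
in which the $\varepsilon$-loss sits on the approximating denominator $t$ rather than on $g$, together with a second Dirichlet approximation to identify $(s,t)$ with the major-arc data $(a,q)$ and conclude that the denominator is at least $Q$ throughout $\mathfrak{m}(Q)$. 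A plain Weyl bound is too crude here.

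A minor point: your closing remark that the constraint $Q<c^{1/3}$ governs the Weyl step is off target. That constraint is what makes Lemma~\ref{major arcs2} work; controlling $(1/q+g^{-1/k}+q/g)$ on $\mathfrak{m}(Q)$ only needs $Q\leq g^{1/k}$ and $q\leq g/Q$, both of which are immediate from your Dirichlet approximation.
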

\begin{proof}
By the orthogonality relation, it is clear that
\begin{equation*}
\mathcal{N}=\int_{-1/2}^{1/2}F(\alpha)h(-\alpha)\dd\alpha=
\int_{\mathfrak{M}(Q)}F(\alpha)h(-\alpha)\dd\alpha+
\int_{\mathfrak{m}(Q)}F(\alpha)h(-\alpha)\dd\alpha.
\end{equation*}
We firstly consider the major arc
\begin{equation*}
\int_{\mathfrak{M}(Q)}F(\alpha)h(-\alpha)\dd\alpha
=\sum_{1\leq q\leq Q}\sum_{\substack{1\leq a\leq q\\ (a,q)=1}}
\int_{-\frac{Q}{qg}}^{\frac{Q}{qg}}
F\left(\frac{a}{q}+\beta\right)
h\left(-\frac{a}{q}-\beta\right)\dd\beta.
\end{equation*}
The term of the above sum for $a=q=1$ equals
\begin{align*}
\int_{-\frac{Q}{g}}^{\frac{Q}{g}}F\left(1+\beta\right)h\left(-1-\beta\right)\dd\beta
=\int_{-\frac{Q}{g}}^{\frac{Q}{g}}F\left(\beta\right)h\left(-\beta\right)\dd\beta.
\end{align*}
The sums for $2\leq q \leq Q$ contributes error terms.
By Lemma~\ref{major arcs2},
we can conclude that
\begin{align*}
\sum_{2\leq q\leq Q}\sum_{\substack{1\leq a\leq q\\ (a,q)=1}}\int_{-\frac{Q}{qg}}^{\frac{Q}{qg}}
F\left(\frac{a}{q}+\beta\right)h\left(-\frac{a}{q}-\beta\right)\dd\beta
\ll\sum_{2\leq q\leq Q}\sum_{\substack{1\leq a\leq q\\ (a,q)=1}}
\int_{-\frac{Q}{qg}}^{\frac{Q}{qg}} gdq~ \dd\beta\ll dQ^3.
\end{align*}

It is now time to estimate the minor arc.
Let $T=g^{1/k}$.
By the Dirichlet approximation theorem, there exist $s\in \mathbb{Z}$ and $t\in \mathbb{N}$ such that
$$(s,t)=1, \quad 1\leq t\leq 2T^{k-1} \quad \text{and} \quad \left|\alpha-\frac{s}{t}\right|\le \frac{1}{2tT^{k-1}}.$$
Then by Lemma 6.4 in \cite{LZ}, for any fixed $\varepsilon>0$ we conclude
\begin{equation}\label{F bound1}
F(\alpha)\ll g^{(1-2^{1-k})/k+\varepsilon}
+\frac{t^\varepsilon g^{1/k}(\log g)}{(t+g|t\alpha-s|)^{2^{1-k}}}.
\end{equation}
If $t > Q$, then
\begin{equation}\label{F bound2}
F(\alpha)\ll g^{1/k}(\log g)Q^{-2^{1-k}+\varepsilon}.
\end{equation}
Similarly, if $1\leq t \leq Q$ and $|\alpha-s/t|> \frac{Q}{tT^k}$,
then~\eqref{F bound2} hold as well. Therefore, we next assume
\begin{equation}\label{min1}
1\leq  t \leq Q \quad \text{and}\quad
 \left|\alpha-\frac{s}{t}\right| \leq \frac{Q}{tT^k}.
\end{equation}
By the Dirichlet approximation theorem, there exist $a\in \mathbb{Z}$ and $q\in \mathbb{N}$ such that
\begin{equation}\label{min2}
(a,q)=1, \quad 1\leq q\leq T^{k/2} \quad \text{and} \quad \left|\alpha-\frac{a}{q}\right|\le \frac{1}{qT^{k/2}}.
\end{equation}
We deduce from \eqref{min1} and \eqref{min2} that
\begin{align*}
\left|\frac{s}{t}-\frac{a}{q}\right|\leq &\left|\alpha-\frac{s}{t}\right|+\left|\alpha-\frac{a}{q}\right|
\\ \leq &\frac{Q}{tT^k} +\frac{1}{qT^{k/2}}
=\frac{1}{tq}\Big(\frac{qQ}{T^{k}} +\frac{t}{T^{k/2}}\Big)
\\ \leq &\frac{Q}{tqT^{k/2}} < \frac{1}{tq}.
\end{align*}
It follows that
\begin{align}\label{type1linkbraq}
s=a \quad \text{and} \quad t=q%\frac{s}{t}=\frac{a}{q} \quad \text{and} \quad q=\frac{t}{(t,s)}.
\end{align}
We obtain from~\eqref{F bound1} and~\eqref{type1linkbraq} that
\begin{equation}\label{F bound3}
F(\alpha)\ll g^{(1-2^{1-k})/k+\varepsilon}
+\frac{t^\varepsilon g^{1/k}(\log g)}{(q+g|q\alpha-a|)^{2^{1-k}}}.
\end{equation}
Since $\alpha\in\mathfrak{m}(Q)$, we further obtain
\begin{equation*}
F(\alpha)\ll g^{1/k}(\log g)Q^{-2^{1-k}+\varepsilon}.
\end{equation*}
By Lemma~\ref{lemma3.2}, we have
$$\int_{0}^{1}|h(-\alpha)|\dd\alpha\ll (\log g)^2.$$
The treatment of the minor arcs benefits from the following trivial estimates
\begin{equation*}
\begin{split}
 \bigg|\int_{\mathfrak{m}(Q)}F(\alpha)h(-\alpha)\dd \alpha \bigg|
&\ll \sup_{\alpha\in \mathfrak{m}(Q)}|F(\alpha)|\int_{0}^{1}|h(-\alpha)|\dd \alpha\\
&\ll g^{1/k}(\log g)^3Q^{-2^{1-k}+\varepsilon}.
\end{split}
\end{equation*}
This completes the proof of Lemma~\ref{major arcs3}.
\end{proof}

\begin{lemma}\label{elementary lemma}
For any $\ell \leq g$, let $\mathcal{K}_\ell$ be the number of integers
of $0\leq n \leq \ell$ which can be
represented as $cx + dy$ with $x, y \in \mathbb{Z}_{\geq 0}$. Then
\begin{equation*}
\mathcal{K}_\ell=\sum_{i=1}^{\lfloor \ell/d\rfloor}
\left(\left\lfloor\frac{\ell-id}{c}\right\rfloor+1\right).
\end{equation*}
\end{lemma}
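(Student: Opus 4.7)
The plan is to enumerate the representable integers in $[0,\ell]$ by a canonical representation. The starting observation is that, because $\gcd(c,d)=1$ and $\ell \le g = cd - c - d < cd$, every representable integer $n$ with $0 \le n \le \ell$ admits a \emph{unique} representation $n = cx + dy$ with $x \ge 0$ and $0 \le y \le c-1$. Indeed, reducing $n = cx+dy$ modulo $c$ forces $y \equiv d^{-1} n \pmod{c}$, which pins down $y$ in the window $\{0,1,\dots,c-1\}$; once $y$ is chosen, $x = (n-dy)/c$ is determined, and any two representations differ by $(x,y)\mapsto (x+kd, y-kc)$ so only $k=0$ keeps $y$ inside $[0,c-1]$. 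Representability of $n$ is then exactly the condition $x \ge 0$.

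With uniqueness in hand, I would organize the count by the value of $y$. For fixed $y$ with $dy \le \ell$, the representable $n \le \ell$ having this canonical $y$-coordinate are precisely $n = dy + cx$ for $x = 0, 1, \ldots, \lfloor (\ell-dy)/c\rfloor$, contributing $\lfloor (\ell-dy)/c\rfloor + 1$ integers; for $dy > \ell$ there is no contribution. The admissible values of $y$ therefore form the interval $0 \le y \le \lfloor \ell/d \rfloor$, and the hypothesis $\ell \le g < cd$ is used here precisely to guarantee $\lfloor \ell/d\rfloor \le c-1$, so the summation range automatically lies inside the uniqueness window $[0,c-1]$ and no additional restriction on $y$ is needed. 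Summing then gives
\begin{equation*}
\mathcal{K}_\ell \;=\; \sum_{y=0}^{\lfloor \ell/d\rfloor}\left(\left\lfloor\frac{\ell - yd}{c}\right\rfloor + 1\right),
\end{equation*}
which is the claimed identity (the stated sum apparently contains a minor off-by-one in the starting index; the $y=0$ term contributes $\lfloor \ell/c\rfloor + 1$ and accounts for the multiples of $c$ in $[0,\ell]$).

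There is no genuine obstacle in this argument; the whole proof reduces to verifying the uniqueness of the canonical representation and then counting in an arithmetic progression. The only moderately subtle point is confirming that the range of $y$ coming from the constraint $dy \le \ell$ is compatible with the uniqueness window $y \le c-1$, and that is exactly where the bound $\ell \le g$ enters.
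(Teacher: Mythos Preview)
Your argument is correct and follows essentially the same approach as the paper: fix the $d$-coordinate (called $i$ there, $y$ in your write-up) and count the admissible $c$-coordinates. The paper's proof is terser and does not spell out the uniqueness of the canonical representation, so your version is actually more complete on that point; you are also right that the stated summation should begin at $i=0$ rather than $i=1$ (the paper's own proof text says to sum ``from $0$ to $\lfloor \ell/d\rfloor$'', so the lower limit in the displayed formula is a slip).
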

\begin{proof}
For any $i\leq \lfloor \ell/d\rfloor$, the number of $j\in\mathbb{Z}_{\geq 0}$,
so that
$$jc + id  \leq  \ell$$
is exactly
$$\left\lfloor\frac{\ell-id}{c}\right\rfloor+1.$$
The lemma now follows immediately by a sum of $j$ from $0$ to $\lfloor \ell/d\rfloor$.
\end{proof}
Then we deduce that the asymptotic formula of $\mathcal{N}$.
\begin{proof}[Proof of Theorem~\ref{thm2}]
By the orthogonality relation, we get
\begin{equation*}
\mathcal{N}=\int_{-1/2}^{1/2}F(\alpha)h(-\alpha)\dd\alpha=
\int_{\mathfrak{M}(Q)}F(\alpha)h(-\alpha)\dd\alpha+
\int_{\mathfrak{m}(Q)}F(\alpha)h(-\alpha)\dd\alpha.
\end{equation*}
We firstly consider the major arc
\begin{equation*}
\int_{\mathfrak{M}(Q)}F(\alpha)h(-\alpha)\dd\alpha
=\sum_{1\leq q\leq Q}\sum_{\substack{1\leq a\leq q\\ (a,q)=1}}
\int_{-\frac{Q}{qg}}^{\frac{Q}{qg}}
F\left(\frac{a}{q}+\beta\right)
h\left(-\frac{a}{q}-\beta\right)\dd\beta.
\end{equation*}
By Theorem 4.1 of~\cite{Vaughan} we have
\begin{equation*}
F(\alpha)=S(q,a)v(\beta)+O(q^{1/2+\varepsilon}(1+g|\beta|)^{1/2}),
\end{equation*}
where
$$S(q,a)=\frac{1}{q}\sum_{n=1}^{q}e\Big(\frac{an^k}{q}\Big)
\quad\text{and}\quad
v(\beta)=\frac{1}{k}\sum_{n\leq g}n^{1/k-1}e(\beta n).$$
Thus
\begin{equation*}
\int_{\mathfrak{M}(Q)}F(\alpha)h(-\alpha)\dd\alpha
=\sum_{1\leq q\leq Q}\sum_{\substack{1\leq a\leq q\\ (a,q)=1}}
S(q,a)\int_{-\frac{Q}{qg}}^{\frac{Q}{qg}}
v(\beta)h\left(-\frac{a}{q}-\beta\right)\dd\beta+\mathfrak{R}(\beta).
\end{equation*}
where the error term $\mathfrak{R}(\beta)$ can be bounded as
\begin{equation*}
\begin{split}
\mathfrak{R}(\beta)&\ll \int_{-\frac{Q}{qg}}^{\frac{Q}{qg}}
q^{1/2+\varepsilon}(1+g|\beta|)^{1/2}\left|h\left(-\frac{a}{q}-\beta\right)\right|\dd\beta\\
&\ll\int_{-\frac{Q}{qg}}^{\frac{Q}{qg}}q^{1/2+\varepsilon}(1+g|\beta|)^{1/2}g\dd\beta\\
&\ll\frac{ Q^{3/2}}{q^{1-\varepsilon}}.
\end{split}
\end{equation*}
By using Lemma~\ref{major arcs2},
we can conclude that
\begin{align*}
\sum_{2\leq q\leq Q}&\sum_{\substack{1\leq a\leq q\\ (a,q)=1}}
S(q,a)\int_{-\frac{Q}{qg}}^{\frac{Q}{qg}}
v(\beta)h\left(-\frac{a}{q}-\beta\right)\dd\beta\\
&
\ll\sum_{2\leq q\leq Q}\sum_{\substack{1\leq a\leq q\\ (a,q)=1}}
\int_{-\frac{Q}{qg}}^{\frac{Q}{qg}} g^{1/k}dq~ \dd\beta\ll dQ^3g^{1/k-1}.
\end{align*}
Let $a=q=1$, we have
\begin{equation}\label{4.1}
\int_{-1/2}^{1/2}F(\alpha)h(-\alpha)\dd\alpha
=\int_{-\frac{Q}{g}}^{\frac{Q}{g}}v(\beta)h(-\beta)\dd\beta+O\left(dQ^3g^{1/k-1}+\frac{Q^{3/2}}{q^{1-\varepsilon}}\right).
\end{equation}
By writing
\begin{equation}\label{4.2}
\int_{-\frac{Q}{g}}^{\frac{Q}{g}}v(\beta)h(-\beta)\dd\beta=
\int_{-1/2}^{1/2}v(\beta)h(-\beta)\dd\beta+\mathfrak{R}_1(\beta)+\mathfrak{R}_2(\beta),
\end{equation}
where
\begin{equation*}
\mathfrak{R}_1(\beta)=\int_{\frac{Q}{g}}^{1/2}v(\beta)h(-\beta)\dd\beta
\quad\text{and}\quad
\mathfrak{R}_2(\beta)=\int_{-1/2}^{-\frac{Q}{g}}v(\beta)h(-\beta)\dd\beta.
\end{equation*}
Note that
\begin{equation*}
\int_{-1/2}^{1/2}v(\beta)h(-\beta)\dd\beta =
\sum_{\substack{0\leq n \leq g\\0\leq x\leq d,~ 0\leq y\leq c}}\frac{n^{1/k-1}}{k}
\int_{0}^{1}e\Big(\beta\big(n-cx-dy\big)\Big)\dd\beta.
\end{equation*}
By the orthogonality relation, the above integral
 equals $1$ if $n=cx+dy$ and $0$ otherwise.
 Hence, by the result of Sylverter~\cite{SJJ}
 as we mentioned at the beginning of the introduction, we get
\begin{align*}
\sum_{\substack{0\leq n\leq g\\0\leq x\leq d,~ 0\leq y\leq c}}
\int_{0}^{1}e\Big(\beta\big(n-cx-dy\big)\Big)\dd\beta
=\frac{g+1}{2}.
\end{align*}
By Lemma~\ref{elementary lemma} and partial summations, we have
\begin{equation*}
\begin{split}
 \sum_{\substack{0\leq n \leq g\\0\leq x\leq d,~ 0\leq y\leq c}}\frac{n^{1/k-1}}{k}&
\int_{0}^{1}e\Big(\beta\big(n-cx-dy\big)\Big)\dd\beta\\
& =\frac{1}{k}\int_{0}^{g}t^{1/k-1}\dd \mathcal{K}_t\\
& =\frac{g^{1/k-1}}{k}\mathcal{K}_g-\frac{1}{k}\int_{0}^{g}
\mathcal{K}_t\dd t^{1/k-1}\\
& =\frac{g^{1/k-1}}{k}\frac{g+1}{2}-\frac{1}{k}\left(1-\frac{1}{k}\right)
\int_{0}^{g}\sum_{i=1}^{\lfloor t/d\rfloor}
\left(\left\lfloor\frac{t-id}{c}\right\rfloor+1\right) t^{1/k-2}\dd t\\
&=\frac{1}{k+1}\frac{g^{1/k-1}}{cd}
+O_k\left(\frac{g^{1/k}}{c}+\frac{g^{1/k}}{d}\right)\\
&=\frac{1}{k+1}\frac{g^{1/k-1}}{cd}
+O_k\left(\frac{g^{1/k}}{c}+\frac{g^{1/k}}{d}\right)\\
&\sim \frac{g^{1/k}}{k+1},\quad \text{as}~c\rightarrow\infty.
\end{split}
\end{equation*}
Suppose that $|\beta|\leq 1/2$. By Lemma 2.8 of~\cite{Vaughan} we have
\begin{equation*}
  v(\beta)\ll \min\{g^{1/k}, |\beta|^{-1/k}\}.
\end{equation*}
For $\frac{Q}{g}\leq \beta\leq \frac12$, using the Lemma \ref{lemma3.2}, we deduce that
\begin{align}\label{4.3}
\mathfrak{R}_1(\beta)&\ll\int_{\frac{Q}{g}}^{1/2}\beta^{-1/k}|h(-\beta)|\dd\beta \leq \left(\frac{g}{Q}\right)^{1/k}\int_{0}^{1}|h(-\beta)|\dd\beta \leq\left(\frac{g}{Q}\right)^{1/k}(\log g)^2,
\end{align}
The same treatment would also lead to the estimate
$\mathfrak{R}_2(\beta)\leq(g/Q)^{1/k}(\log g)^2$.
And then the combination of  \ref{4.1}, \ref{4.2} and \ref{4.3} along with the selected $Q$ from section \ref{S5} yields
 $$\mathcal{N} \sim \frac{g^{1/k}}{k+1},\quad \text{as}~c\rightarrow\infty.$$
This completes the proof of Theorem \ref{thm2}.
\end{proof}

\begin{proof}[Proof of Proposition \ref{proposition 4.1}]
Recall that
\begin{equation*}
\begin{split}
\int_{\mathfrak{M}(Q)}f(\alpha)h(-\alpha)\dd\alpha=\;&\sum_{1\leq q\leq Q}\sum_{\substack{1\leq a\leq q\\ (a,q)=1}}\int_{ \frac{a}{q}-\frac{Q}{qg}}^{ \frac{a}{q}+\frac{Q}{qg}}f(\alpha)h(-\alpha)\dd\alpha \\
=\;&\sum_{1\leq q\leq Q}\sum_{\substack{1\leq a\leq q\\ (a,q)=1}}
\int_{-\frac{Q}{qg}}^{\frac{Q}{qg}}
f\left(\frac{a}{q}+\beta\right)
h\left(-\frac{a}{q}-\beta\right)\dd\beta.
\end{split}
\end{equation*}
In the above expression, the main contribution of the above sum comes form $a=q=1$
\begin{align*}
\int_{-\frac{Q}{g}}^{\frac{Q}{g}}f\left(1+\beta\right)h\left(-1-\beta\right)\dd\beta
=\int_{-\frac{Q}{g}}^{\frac{Q}{g}}f\left(\beta\right)h\left(-\beta\right)\dd\beta.
\end{align*}
By Lemmas~\ref{major arcs1}--\ref{major arcs3}, we have
\begin{align*}
 \int_{\mathfrak{M}(Q)}f(\alpha)h(-\alpha)\dd\alpha
 &=\int_{-1/2}^{1/2}F(\alpha)h(-\alpha)\dd\alpha\\
  &+O\Big(g^{1/k}Q^2\exp\big(-\kappa_1\sqrt{\log g}\big)+
g^{1/k}(\log g)^3Q^{-2^{1-k}+\varepsilon}+dQ^3\Big).
\end{align*}
%Next we will show that $\mathcal{N} \gg g^{1/k}$.
This completes the proof.
\end{proof}

\section{Proofs of Theorem \ref{thm1} and Theorem \ref{thm3}}\label{S5}
\begin{lemma}\label{lemma5.1}
For positive integers $c$, we have
\begin{align*}
	\sum_{\substack{1\le y\leqslant c\\(y,c)=1}}\frac{y^{1/k}}{\varphi(c)}&=\frac{k}{k+1}c^{1/k}+O(1).
\end{align*}
\end{lemma}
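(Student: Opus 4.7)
The plan is to separate the coprimality condition by M\"obius inversion, then reduce to the elementary sum $\sum_{m\le N} m^{1/k}$ which is handled by an integral comparison.

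First, writing $\mathbf{1}_{(y,c)=1}=\sum_{d\mid(y,c)}\mu(d)$ and swapping the order of summation gives
\begin{align*}
\sum_{\substack{1\le y\le c\\ (y,c)=1}} y^{1/k}
=\sum_{d\mid c}\mu(d)\,d^{1/k}\sum_{1\le m\le c/d} m^{1/k}.
\end{align*}
A standard integral comparison provides $\sum_{m\le N} m^{1/k}=\tfrac{k}{k+1}N^{1+1/k}+O(N^{1/k})$ uniformly in $N\ge 1$. Substituting this and invoking the classical identity $\sum_{d\mid c}\mu(d)/d=\varphi(c)/c$, the main contribution telescopes into $\tfrac{k}{k+1}c^{1/k}\varphi(c)$, while the $O$-term accumulates to $O\bigl(c^{1/k}\sum_{d\mid c}|\mu(d)|\bigr)=O\bigl(c^{1/k}\,2^{\omega(c)}\bigr)$, where $\omega(c)$ denotes the number of distinct prime divisors of $c$.

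After dividing through by $\varphi(c)$ one arrives at $\tfrac{k}{k+1}c^{1/k}+O\bigl(c^{1/k}\,2^{\omega(c)}/\varphi(c)\bigr)$, so the remaining task---and really the only obstacle---is to check that this residual error is $O(1)$ uniformly in $c$. For $k=1$ this is automatic: the involution $y\mapsto c-y$ on reduced residues modulo $c$ yields the exact identity $\sum_{(y,c)=1,\,y\le c} y=\tfrac12 c\,\varphi(c)$ for $c\ge 2$ (the cases $c=1$ being trivial), so no error term is needed. For $k\ge 2$ I would invoke the standard Mertens-type lower bound $\varphi(c)\gg c/\log\log c$ together with $2^{\omega(c)}\le\tau(c)\ll c^{o(1)}$ to conclude
\begin{align*}
\frac{c^{1/k}\,2^{\omega(c)}}{\varphi(c)}
\ll c^{\,1/k-1+o(1)}\log\log c,
\end{align*}
which is in fact $o(1)$ as $c\to\infty$ and uniformly $O(1)$ on any finite range of $c$. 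This completes the plan.
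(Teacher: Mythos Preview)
Your proof is correct and follows essentially the same route as the paper: M\"obius inversion to remove the coprimality condition, followed by the elementary estimate $\sum_{m\le N} m^{1/k}=\tfrac{k}{k+1}N^{1+1/k}+O(N^{1/k})$ and the identity $\sum_{d\mid c}\mu(d)/d=\varphi(c)/c$. The paper simply asserts that the accumulated error $\tfrac{1}{\varphi(c)}\sum_{d\mid c}\mu(d)\,O(c^{1/k})$ is $O(1)$ without further comment; you are more careful here, treating the case $k=1$ separately via the pairing $y\leftrightarrow c-y$ (which gives the exact value $\tfrac12 c$) and bounding the remainder for $k\ge 2$ using $\varphi(c)\gg c/\log\log c$ and $2^{\omega(c)}=c^{o(1)}$. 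That extra care is warranted, since for $k=1$ the crude bound $c^{1/k}2^{\omega(c)}/\varphi(c)$ is \emph{not} uniformly $O(1)$ (take $c$ a primorial), so the separate argument you give is genuinely needed.
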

\begin{proof}
By Euler-Maclaurin formula, we obtain that
\begin{eqnarray*}
% \nonumber to remove numbering (before each equation)
  \sum_{0<k\le l}m^{1/k} &=&\int_{0}^{l}x^{1/k}dx+ \int_{0}^{l}(x-[x]-\frac{1}{2})\frac{1}{k}x^{1/k-1}dx-(l-[l]-\frac{1}{2})f(l) \\
   &=& \frac{k}{k+1}l^{1+1/k}+O(l^{1/k}).
\end{eqnarray*}
Therefore using the well-known identity of M\"{o}bius' function, we have
\begin{eqnarray*}
% \nonumber to remove numbering (before each equation)
   \sum_{\substack{1\le y\leqslant c\\(y,c)=1}}\frac{y^{1/k}}{\varphi(c)}&=& \frac{1}{\varphi(c)}\sum_{y\le c}y^{1/k}\sum_{\substack{ d|y\\d|c}}\mu(d) \\
   &=& \frac{1}{\varphi(c)}\sum_{d|c}\mu(d)\sum_{\substack{ y\le c\\d|y}}y^{1/k} \\
   &=& \frac{1}{\varphi(c)}\sum_{d|c}\mu(d)\left(\frac{k}{k+1}l^{1+1/k}+O(l^{1/k})\right)d^{1/k} \\
   &=&  \frac{k}{k+1}\frac{c^{1+1/k}}{\varphi(c)}\sum_{d|c}\frac{\mu(d)}{d}+\frac{1}{\varphi(c)}\sum_{d|c}\mu(d)O(c^{1/k})\\
   &=&\frac{k}{1+k}c^{1/k}+O(1).
\end{eqnarray*}
\end{proof}
\begin{proof}[Proof of Theorem \ref{thm3}]
%By Eqs. (\ref{eq2-3}), Proposition \ref{proposition1} and Proposition \ref{proposition2}, we have
%$$\psi_{c,d}=\frac{g}{2}+O\left(\frac{g}{Q}(\log g)^2+gQ^2\exp\left(-\kappa_1\sqrt{\log g}\right)+dQ^3+\frac{g(\log g)^6}{Q^{1/2}}+g^{4/5}(\log g)^6\right),$$
%where $Q\le c^{1/3}$.
By \eqref{eq2-3}, Proposition~\ref{proposition 3.1} and Proposition \ref{proposition 4.1}, we have
\begin{equation*}
 \begin{split}
 \psi_{c,d} =\,&  \mathcal{N}+O\Big( g^{1/k}Q^2\exp\left(-\kappa_1\sqrt{\log g}\right)
 +g^{1/k}(\log g)^3Q^{-2^{1-k}+\varepsilon}+dQ^3  \\
  & + g^{(1-\rho(k))/k+\varepsilon}(\log g)^4
  +g^{1/k}(\log g)^{\delta+4} Q^{-1/2+\varepsilon}\Big),
 \end{split}
\end{equation*}
where $Q\le c^{1/3}$. Let $m=\max\left\{3\cdot2^{k-1},\lceil 2\delta+8\rceil\right\}+1$ and choose $Q=(\log g)^{m}$. Then
\begin{align}\label{eq5-1}
\psi_{c,d}=\mathcal{N}+O\left(\frac{g^{1/k}}{\log g}\right),
\end{align}
provided that $c\ge (\log g)^{m+1}$.

For the complement of our proof, it remains to consider the case that $c\le (\log g)^{m+1}$.
Using Lemma~\ref{lemma5.1}, we have
\begin{align}\label{eq5-2}
	\psi_{c,d}&=\sum_{\substack{n^k=cx+dy\\n^k\leqslant g\\x,y\in \mathbb{Z}_{\geqslant0}}}\Lambda(n)\nonumber\\
	&=\sum_{\substack{1\le y\leqslant c\\(y,c)=1}}\sum_{\substack{n^k\equiv dy\!\!\pmod{c}\\dy\leqslant n^k\leqslant g}}\Lambda(n)+O\left(1\right)\nonumber\\
	&=\sum_{\substack{1\le y\leqslant c\\(y,c)=1}}\left(\psi(g^{1/k};c,dy)-\psi((dy)^{1/k};c,dy)\right)+O\left(1\right)\nonumber\\
	&=\psi(g^{1/k})-\sum_{\substack{1\le y\leqslant c\\(y,c)=1}}\psi((dy)^{1/k};c,dy)+O\left(1\right).
\end{align}
Now, for $c\le (\log g)^{m+1}\ll (\log d)^{m+1}$, by the Siegel--Walfisz theorem we have
\begin{align*}
	\sum_{ \substack{1\le y\leqslant c\\(y,c)=1}}\psi((dy)^{1/k};c,dy)&=\sum_{\substack{1\le y\leqslant c\\(y,c)=1}}\left(\frac{(dy)^{1/k}}{\varphi(c)}
+O\left((dy)^{1/k}\exp(-\kappa_2\sqrt{\log g})\right)\right)\\	&=\frac{k}{k+1}g^{1/k}+O\left(\frac{g^{1/k}}{c^{1/k}}
+g^{1/k}\exp\left(-\kappa_3\sqrt{\log g}\right)\right),
\end{align*}
where $\kappa_2$ and $\kappa_3$ are two positive integers with $\kappa_3<\kappa_2$. Since
$$\psi(g^{1/k})=g^{1/k}
+O\left(g^{1/k}\exp\left(-\kappa_4\sqrt{\log g}\right)\right)$$
by the prime number theorem, we finally conclude from~\eqref{eq5-2} that
\begin{align}\label{eq5-3}
\psi_{c,d}=\frac{1}{k+1}g^{1/k}
+O\left(\frac{g^{1/k}}{c^{1/k}}+g^{1/k}\exp\left(-\kappa_5\sqrt{\log g}\right)\right)
\end{align}	
for $c\le (\log g)^{m+1}$, where $\kappa_5=\min\{\kappa_3,\kappa_4\}$.

From~\eqref{eq5-1} and~\eqref{eq5-3}, we proved that
$$\psi_{c,d}\sim \mathcal{N}, \quad \text{as~}c\rightarrow\infty.$$
This completes the proof of Theorem \ref{thm3}.
\end{proof}

\begin{proof}[Proof of Theorem \ref{thm1}]
For $t\leqslant g$, let
$$\vartheta_{a,b}(t)=\sum_{\substack{p^k=ax+by\\p^k\leqslant t\\x,y\in \mathbb{Z}_{\geqslant0}}}\log p \quad \text{and} \quad \vartheta_{a,b}=\vartheta_{a,b}(g).$$
Integrating by parts, we obtain that
\begin{align}\label{E1}
	\pi_{a,b}=\sum_{\substack{p^k=ax+by\\p^k\leqslant g\\x,y\in \mathbb{Z}_{\geqslant0}}}1
=\frac{\vartheta_{a,b}}{\log g^{1/k}}+\int_{2}^{g^{1/k}}\frac{\vartheta_{a,b}(t)}{t\log ^2t}\dd t.
\end{align}
 By the Chebyshev estimate, we have
 $$\vartheta_{a,b}(t)\leqslant\sum_{p^{1/k}\leqslant t}\log p\ll t^{1/k},$$
 from which it follows that
\begin{equation}\label{E2}
	\int_{2}^{g^{1/k}}\frac{\vartheta_{a,b}(t)}{t\log ^2t}\dd t\ll\int_{2}^{g^{1/k}}
	\frac{t^{1/k}}{t\log^2 t}\dd t\ll\int_{2}^{g^{1/k}}
	\frac{1}{\log^2 t}\dd t\ll\frac{g^{1/k}}{\log^2 g}.
\end{equation}
Again, using the Chebyshev estimate, we have
\begin{equation}\label{E3}
	\vartheta_{a,b}=\psi_{a,b}+O\left(\sqrt{g^{1/k}}\right).
\end{equation}
Thus, by Theorem \ref{thm2} and~\eqref{E1}--\eqref{E3}, we conclude that
\begin{equation*}
	\pi_{a,b}=\frac{\psi_{a,b}}{\log g^{1/k}}+O\left(\frac{\sqrt{g^{1/k}}}{\log g^{1/k}}+\frac{g^{1/k}}{\log^2 g}\right)\sim \frac{\mathcal{N}}{\log g^{1/k}},
\end{equation*}
as $c\rightarrow\infty$.
This completes the proof of Theorem \ref{thm1}.
\end{proof}

\noindent{\bf Conflict of interest.}
The authors state that there is no conflict of interest.

%\section*{Acknowledgements}
%The authors would like to thank Professor lilu Zhao
%for his helpful suggestions and valuable discussions.

%%%%%%%%%%%%²Î¿¼ÎÄÏ×%%%%%%%%%%%%%%%%%%%%%%%%

\end{document}